\documentclass[12pt]{article}
\usepackage{graphicx,amsmath,amssymb,amsthm,rotating,stmaryrd,datetime,pifont,booktabs,enumitem}
\usepackage[mathscr]{eucal}
\usepackage{framed}

\setlist{nosep}

\usepackage[usenames,dvipsnames]{color}

\newtheorem{theorem}{Theorem}

\newtheorem{result}[theorem]{Result}
\newtheorem{lemma}[theorem]{Lemma}
\newtheorem{corollary}[theorem]{Corollary}

\newtheorem{remark}[theorem]{Remark}

\usepackage[usenames,dvipsnames]{color}
\newcommand\red[1]{{\color{red} #1}}

\definecolor{beaublue}{rgb}{0.74, 0.83, 0.9}
\definecolor{cadetblue}{rgb}{0.37, 0.62, 0.63}

\definecolor{cambridgeblue}{rgb}{0.64, 0.76, 0.68}

\definecolor{beaver}{rgb}{0.62, 0.51, 0.44}
\definecolor{bistre}{rgb}{0.24, 0.17, 0.12}
\definecolor{darkbrown}{rgb}{0.4, 0.26, 0.13}

\definecolor{eggplant}{rgb}{0.38, 0.25, 0.32}

\definecolor{cadmiumgreen}{rgb}{0.0, 0.42, 0.24}
\definecolor{lightgrey}{rgb}{0.8, 0.8, 0.8}
\definecolor{grey}{rgb}{0.5, 0.5, 0.5}

\usepackage{fancybox}
\newcommand{\B}{{\textup{\bfseries{B}}}}

\newcommand{\Q}{\mathcal{Q}}

\renewcommand{\H}{\mathcal{H}}
\renewcommand{\P}{\mathcal{P}}
\renewcommand{\S}{\mathcal{S}}

\newcommand{\K}{\mathcal{K}}
\newcommand{\N}{\mathcal{N}}
\newcommand{\E}{\mathcal{E}}
\newcommand{\R}{\mathcal{R}}
\newcommand{\C}{\mathcal{C}}
\newcommand{\D}{\mathcal{D}}

\newcommand{\li}{\ell_\infty}
\newcommand{\si}{\Sigma_\infty}

\newcommand{\F}{\mathcal{F}}
\newcommand{\PG}{{\textup{PG}}}

\newcommand{\Fqq}{\mathbb{F}_{q^2}}
\newcommand{\Fqqq}{\mathbb{F}_{\hspace*{-1mm}{q^{3}}}}
\newcommand{\Fqqqq}{\mathbb{F}_{\hspace*{-1mm}{q^{4}}}}

\newcommand{\Fq}{\mathbb{F}_{\hspace*{-.5mm}q}}

\definecolor{shadecolor}{rgb}{0.9, 0.9, 0.9}

\begin{document}

\title{The intersection of two linear sets of rank 3 in $\PG(1,q^3)$}
\author{S.G. Barwick\footnote{School of Mathematical Sciences, Adelaide University, Adelaide, 5005, Australia. susan.barwick@adelaide.edu.au}, 
Alice M.W. Hui\footnote{School of Mathematical and Statistical Sciences, Clemson University, USA. aliceh2@clemson.edu} 
and Wen-Ai Jackson\footnote{School of Mathematical Sciences, Adelaide University, Adelaide, 5005, Australia. wen.jackson@adelaide.edu.au}}
\date{}
\maketitle


AMS code: 51E20

Keywords: scattered linear sets, clubs, intersection problem, projective line, Bruck-Bose representation


\begin{abstract} This article studies the intersection of  two linear sets of rank 3 of $\PG(1,q^3)$. In particular, all possible intersection sizes are determined, the structure of the largest intersection sets is  described, and existence results are given. \end{abstract}

 
 \section{Introduction}
 
 The intersection problem for two linear sets $\D_1,\D_2$ in $\PG(r,q^n)$ was described in \cite{ZZ}  as the following two questions: (1) do $\D_1,\D_2$ meet in at least one point? and (2) if $|\D_1\cap \D_2|>0$, then what is the size of $\D_1\cap \D_2$? While this is in general a hard question, the intersection problem is relevant to several geometrical problems, including multiple blocking sets  \cite{BBL,lunardon}, KM-arcs \cite{DBVDV},
and  finite semifields \cite{CPT,SVDVV}. See \cite{lavr15,polverino} for more details.
 
The intersection problem has been investigated in several cases, including: two $\Fq$-linear sets of rank 3 in $\PG(2,q^3)$, see \cite{ferr03}; two subgeometries, see \cite{DD2008}; an $\Fq$-line with an $\Fq$-linear set in $\PG(1,q^n)$, see \cite{lavr10,pepe11}; two linear sets of rank $3$ in $\PG(1,q^n)$, see \cite{lavr10}; two scattered linear sets of rank $n+1$ in $\PG(2,q^n)$, see \cite{DD2014}; a scattered linear set of rank $3n$ with either a line or a plane in $\PG(2n-1,q^3)$, see \cite{lavr13}; two clubs in $\PG(1,q^n)$, see \cite{SVDVV}; two linear sets of rank $n$ in $\PG(1,q^n)$, see \cite{ZZ}.

This article  looks at two linear sets of rank 3 in $\PG(1,q^3)$, there are two such sets, namely clubs and maximum scattered linear sets. The following is known in $\PG(1,q^3)$. The first intersection problem question is looked at in 
\cite[Cor 6.6]{SVDVV}, which shows that if $q$ is odd, then there exists two clubs $\D_1,\D_2$ in $\PG(1,q^3)$ with $|\D_1\cap\D_2|=0$.
Further \cite[Remark 3.13]{SVDVV} conjectures that if $q$ even, then   $|\D_1\cap\D_2|>0$ for all clubs $\D_1,\D_2$. 
The second intersection problem is looked at in \cite{ferr03} which gives a bound on the size of the intersection. 

\begin{result}\cite[Lemmas 2.3, 2.4, 2.5]{ferr03} \label{thm-FS}
Two $\Fq$-linear sets of rank 3 in $\PG(1,q^3)$  meet in at most $2q+2$ points.  
\end{result}

Further, \cite[Remark 24]{lavr10} gives an example of two maximum scattered linear sets in $\PG(1,q^3)$ that meet in exactly $2q+2$ points, showing that the  bound in Result~\ref{thm-FS} is tight for the case of two maximum scattered linear sets. We will show that the bound is not tight when one of the linear sets is a club. In particular, we show that  in  $\PG(1,q^3)$,  two clubs meet in at most $2q$ points, and a club and a maximum scattered linear set meet in at most $2q+1$ points. Moreover, both these bounds are tight.

This article 
 determines all possibilities for the size of the intersection $\D_1\cap\D_2$ of  two linear sets of rank 3 in $\PG(1,q^3)$. The case when $\D_1,\D_2$ are both maximum scattered linear sets is looked at in Theorem~\ref{113}; when $\D_1$ is a maximum scattered linear set and $\D_2$ is a club is looked at in Theorem~\ref{114}; and when $\D_1$ and $\D_2$ are both clubs is looked at in Theorem~\ref{116}.  Further, in each case we describe the structure of the largest intersections, and look at the conditions whereby the intersection contains an $\Fq$-line. Moreover, in  each case, the main theorem is followed by a corollary which gives some existence results.

%

\section{Background and preliminaries}

\subsection{The BB-representation of $\PG(2,q^3)$ in $\PG(6,q)$}\label{sec2.1}

A \emph{$2$-spread} 
 of $\PG(5,q)$ is a set of planes that partition the points of $\PG(5,q)$. A $2$-spread 
$\S$ is called \emph{regular} (or Desarguesian) if for any three planes in $\S$, the $2$-regulus containing them is contained in $\S$. See \cite[Section 25.6]{hirs91} for
more information.
A regular $2$-spread of $\PG(5,q)$ can be constructed as follows, see \cite{segre}. Embed $\PG(5,q)$ in $\PG(5,q^3)$ and let $g$ be a line of
$\PG(5,q^3)$ disjoint from $\PG(5,q)$. The set of planes in  $\{\langle P_i,P_i^q,P_i^{q^2}\rangle\cap\PG(5,q)\,|\, P_i\in g\}$ partition the points of $\PG(5,q)$. Moreover, these planes form a
regular spread $\S$ of $\PG(5,q)$. The lines $g$, $g^q$, $g^{q^2}$ are called the (conjugate
skew) {\em transversal lines} of the spread $\S$. Conversely, given a regular 2-spread
in $\PG(5,q)$,
there is a unique set of three (conjugate skew) transversal lines in the cubic extension  $\PG(5,q^3)$ that generate
$\S$ in this way.

The linear representation of a finite
translation plane $\P$ of order $q^t$ in $\PG(2t,q)$ was developed independently by
Andr\'{e}~\cite{andr54} and Bruck and Bose
\cite{bruc64,bruc66}. We will use the vector space construction as developed by Bruck and Bose, and describe the representation of $\PG(2,q^3)$ in $\PG(6,q)$. 
Let $\si$ be a hyperplane of $\PG(6,q)$ and let $\S$ be a 2-spread
of $\si$.  Consider the following incidence
structure:
the \textsl {points} of $\mathcal A$ are the points of $\PG(6,q)\setminus\si$; the \textsl {lines} of $\mathcal A$ are the 3-spaces of $\PG(6,q)$ that contain
  an element of $\S$ and are not contained in $\si$; and \textsl {incidence} in $\mathcal A$ is induced by incidence in
  $\PG(6,q)$.
Then the incidence structure $\mathcal A$ is an affine plane of order $q^3$. We
can complete $\mathcal A$ to a projective plane $\mathcal P$; the points on the line at
infinity $\li$ have a natural correspondence to the elements of the 2-spread $\S$.
The plane $\mathcal P$ is Desarguesian iff the spread $\S$ is a regular (Desarguesian) spread. We call this the $\PG(6,q)$ BB-representation of $\PG(2,q^3)$. 

\subsection{The BB-representation of $\PG(1,q^3)$ in $\PG(3,q)$}\label{sec2.2}

We use the BB-representation of $\PG(2,q^3)$ in $\PG(6,q)$ given in Section~\ref{sec2.1} to define our notation for the BB-representation of $\PG(1,q^3)$ in $\PG(3,q)$. Let $\ell=\PG(1,q^3)$ and chose a point at infinity $P_\infty\in\ell$. Embed 
  $\ell$ as a line of $\PG(2,q^3)$ with $P_\infty=\ell\cap\li$. Consider the BB-representation of $\PG(2,q^3)$ in $\PG(6,q)$. The point $P_\infty$ corresponds to a spread plane denoted $\pi_\infty$ and the line $\ell$ corresponds to a 3-space containing $\pi_\infty$. Associated with $\S$ are three transversal lines $g,g^q,g^{q^2}$ in the cubic extension of $\si$.  
  So associated with $\pi_\infty$ are the three conjugate points which are the intersection of the cubic extension of $\pi_\infty$ with the three transversal lines $g,g^q,g^{q^2}$. We called these three points the \emph{three conjugate points defining the spread plane $\pi_\infty$}.
 We call this setting the \emph{$\PG(3,q)$ BB-representation of $\ell=\PG(1,q^3)$ with point at infinity $P_\infty$.} We call $\pi_\infty$ the \emph{plane at infinity} in this representation. 

The representation of $\Fq$-lines of $\PG(1,q^3)$ in the $\PG(3,q)$ BB-representation is determined in \cite{BJ-2012} and we need the following result. (Note that the more general case of the representation of $\Fq$-lines of $\PG(2,q^t)$ in $\PG(2t,q)$ is determined in \cite{RSV}.)

\begin{result}\cite[Thm 2.3, 2.5]{BJ-2012} \label{125}
Consider the $\PG(3,q)$ BB-representation of $\PG(1,q^3)$, with point at infinity $P_\infty$, and denote the plane at infinity by $\pi_\infty$.
\begin{enumerate}
\item
The set $b$ is an $\Fq$-line  that contains the point $P_\infty$  iff the  BB-representation of $b$ is a line not contained in $\pi_\infty$.
\item The set $b$ is an $\Fq$-line  that does not contain the point $P_\infty$  iff the  BB-representation of $b$ is a   twisted cubic  whose cubic extension contains the three conjugate points defining the spread plane $\pi_\infty$.
\end{enumerate}
\end{result}

\subsection{Linear sets of rank 3 in $\PG(1,q^3)$} \label{sec2.3}

In this article we are only concerned with linear sets of rank 3 in $\PG(1,q^3)$. See \cite{lavr15,polverino} for a general introduction to linear sets. 
In the $\PG(6,q)$ BB-representation described above, the line at infinity $\li\cong\PG(1,q^3)$ is represented by a regular (Desarguesian) 2-spread $\S$ in $\PG(5,q)$. This representation arises from the field reduction map   which maps a point $P\in\PG(1,q^3)$ to a  plane   of the spread $\S$ in $\PG(5,q)$. That is, the points of $\PG(1,q^3)$ are in one to one correspondence with the planes of the spread $\S$. If $\pi$ is a plane of $\PG(5,q)$, we let $\B(\pi)$ denote the set of points of $\PG(1,q^3)$ satisfying $X\in\B(\pi)$ iff in $\PG(5,q)$, the  plane of $\S$ corresponding to $X$ meets the plane $\pi$. 
 A set $\D$ in $\PG(1,q^3)$ is called a \emph{linear set of rank 3} iff there is a plane $\pi$ of $\PG(5,q)$ which is not contained in the spread $\S$ such that $\D=\B(\pi)$. 
 
 There are two types of planes of $\PG(5,q)$ that are not in the spread $\S$, so there are two types of linear sets of rank 3. Firstly, 
 suppose $\pi$ is a plane of $\PG(5,q)$ that  meets a unique spread element $\alpha$ in a line, so $\pi$ meets $q^2$ further spread elements in a point. Then $\B(\pi)$ has size $q^2+1$ and is called a \emph{club}; moreover, the point of the club corresponding to $\alpha$ is called the \emph{head} of the club.  A club in $\PG(1,q^3)$ has a unique head, see \cite[Thm 4.1]{BJtgt1}; that is, if $\beta$ is a  plane distinct from $\pi$ with $\B(\beta)=\B(\pi)$, then the plane $\beta$ also meets $\alpha$ in a line.
The second type of linear set of rank 3 arises as follows. Suppose 
  $\pi$ is a plane of $\PG(5,q)$ that meets $q^2+q+1$ planes of $\S$, then $\B(\pi)$   has size $q^2+q+1$ and  is called a \emph{maximum scattered linear set of $\PG(1,q^3)$}.

It is proved in \cite[Thm 5]{lavr10} 
that all clubs in $\PG(1,q^3)$ are projectively equivalent; and all maximum scattered linear sets in $\PG(1,q^3)$ are projectively equivalent.
The article \cite{lavr10}  studies the relationship between $\Fq$-lines and linear sets of rank 3 in $\PG(1,q^3)$, and we summarise the following useful results.  

\begin{result}\cite{lavr10}\label{333}
\begin{enumerate}
\item  An $\Fq$-line meets a linear set  of rank $3$ of $\PG(1,q^3)$  in $0,1,2,3$ or $q+1$ points. 
\item Let $\D$ be  a club in $\PG(1,q^3)$, $q\geq3$. Then $\D$ contains $q^2+q$ $\Fq$-lines, these all contain the head of $\D$. Moreover any two non-head points of $\D$ lie on a unique $\Fq$-line of $\D$. 
\item Let $\D$ be  a maximum scattered linear set in $\PG(1,q^3)$, $q\geq3$. Then $\D$ contains $2(q^2+q+1)$ $\Fq$-lines, these  belong to two families of size $q^2+q+1$. Any two points of $\D$ lie in exactly two $\Fq$-lines of $\D$, one from each family. Two distinct $\Fq$-lines in the same family meet in a unique point; two $\Fq$-lines from different families meet in either $0,$ $1$ or $2$ points. 
\end{enumerate}
\end{result}

Note that if $q=2$, then every set of 3 points of $\PG(1,q^3)$ forms an $\Fq$-line, so the condition of $q\geq3$ in this result is necessary. In particular, if $q=2$, then a club contains $\Fq$-lines that do not contain the head; and a maximum scattered linear set does not have two families of $\Fq$-lines. 

Let $E,F$ be two points of $\PG(1,q^3)$ and let $G$ be the collineation group acting on the points of $\PG(1,q^3)$ that fixes $E,F$ pointwise. The group $G$ is isomorphic to the multiplicative group of $\Fqqq\setminus\{0\}$, so $G$ has a unique Singer subgroup $H$ of order $q^2+q+1$. The orbits of $H$ partition the points of $PG(1,q^3)$ into $q-1$ sets  of size $q^2+q+1$ which are maximum scattered linear sets. Further, given a maximum scattered linear set $\D$ of $\PG(1,q^3)$, then there is a unique pair of points $E,F$ that give rise to $\D$ in this way, the points $E,F$ are called the \emph{carriers} of $\S$, see \cite{BJ-ext1}. 

\subsection{Linear sets in the $\PG(3,q)$ BB-representation}

While the natural setting for defining linear sets of rank 3 of $\PG(1,q^3)$ is in the $\PG(5,q)$ field reduction setting used in Section~\ref{sec2.3}, we can also look at them in the $\PG(3,q)$ BB-representation defined in Section~\ref{sec2.2}.
The next result looks at a linear set of rank 3 of $\ell=\PG(1,q^3)$  and describes the corresponding set in the $\PG(3,q)$ BB-representation. 
This result is proved in \cite{DDVDV} (see Prop 3.3, Thm 3.8 and Thm 4.6) using a direct geometric argument. We give an alternative  short proof that utilizes the algebraic structure of Sherk surfaces  \cite{sher86}. Moreover, our proof also shows that  part 2 does not need the condition $q\geq4$ given in \cite[Thm 3.8]{DDVDV}; and  part 3 does not need the condition $q\geq5$ given in \cite[Thm 4.6]{DDVDV}.

\begin{theorem}  \label{001}
Let $\D$ be an $\Fq$-linear set of rank 3 of $\PG(1,q^3)$. Consider the $\PG(3,q)$ BB-representation of $\PG(1,q^3)$ with point at infinity $P_\infty\in\D$, and denote the plane at infinity by $\pi_\infty$. 
\begin{enumerate}
\item  The set $\D$ is a club with head $P_\infty$ iff the  BB-representation of $\D$ is a plane distinct from $\pi_\infty$.
\item  The set $\D$ is a club with head distinct from $P_\infty$ iff the   BB-representation of $\D$ is a  quadratic cone that meets $\pi_\infty$ in a   conic  whose cubic extension contains the three conjugate points defining the plane $\pi_\infty$. Further the vertex of the cone represents the head of the club.
\item The set $\D$ is a maximum scattered linear set iff  the   BB-representation of $\D$ is a hyperbolic quadric  that meets $\pi_\infty$ in a   conic  whose cubic extension contains the three conjugate points defining the plane $\pi_\infty$. \end{enumerate}
\end{theorem}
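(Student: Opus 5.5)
We work in coordinates and exploit the algebraic structure of Sherk surfaces. Write $\ell=\PG(1,q^3)$ with points $(x,1)$, $x\in\Fqqq$, together with $P_\infty=(1,0)$. Fix an $\Fq$-basis of $\Fqqq$, so that the affine point $(x,1)$ corresponds to the affine point of $\PG(3,q)\setminus\pi_\infty$ with coordinates $x\in\Fqqq\cong\Fq^3$. Then $\pi_\infty$ is the set of $\Fq$-directions of $\Fqqq$. In the cubic extension, the three conjugate points defining $\pi_\infty$ are the three eigen-directions of multiplication by a primitive element; these are the points $x^{q^i}=0$, $i=0,1,2$, in the coordinates $(x,x^q,x^{q^2})$.

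Since all clubs are projectively equivalent, and all maximum scattered linear sets are projectively equivalent, every rank 3 linear set is the image of a standard one under $\mathrm{PGL}(2,q^3)$. Equivalently, every rank 3 linear set has the form $\D=\{\langle f(u)\rangle: u\in U\}$ for a 3-dimensional $\Fq$-subspace $U$ of $\Fqqq^2$. We use $P_\infty\in\D$ and change coordinates by an element of $\mathrm{PGL}(2,q^3)$ fixing $P_\infty$. Such elements act as affine maps $x\mapsto ax+b$, and these induce affinities of $\PG(3,q)$ fixing $\pi_\infty$ and permuting the three conjugate points. So we may put $\D$ in a normal form. A useful one is to write $\D=\{(x,1):x\in\Fqqq,\ T(x)\in \Fq\text{-span}\}\cup\{P_\infty\}$, or more concretely as the projective image $\{\langle(\lambda(t),\mu(t))\rangle\}$ of $U$.

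The key step is then the following. The affine points of $\D$ are the $x$ with $x=\lambda(t)/\mu(t)$, where $t=(t_0,t_1,t_2)\in\Fq^3\setminus\{0\}$ and $\lambda,\mu$ are $\Fq$-linear maps $\Fq^3\to\Fqqq$ with $\mu(t)\neq 0$. Using $P_\infty\in\D$, choose $U$ to contain $(1,0)$, so $U=\langle (1,0),(a,b),(c,d)\rangle_{\Fq}$. The affine points are then $x=(s+ua+vc)/(ub+vd)$, which depends on $s\in\Fq$ freely. Thus $\D$ corresponds to the set of $x$ with $x\,(ub+vd)\in \Fq+\ldots$.

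Cleaner is the Sherk-surface description. The condition that $x$ lies in $\D$ is equivalent to the vanishing of an $\Fq$-rational expression $\mathrm{Tr}$-type equation, namely $\det\big(x^{q^i}\cdot(\text{conjugates})\big)=0$. This equation is of degree at most 2 in $(x,x^q,x^{q^2})$, and it has the shape $\alpha N(x)+\mathrm{Tr}(\beta x^{q+1})+\mathrm{Tr}(\gamma x)+\delta=0$. This is exactly a Sherk surface. Because $P_\infty\in\D$, the cubic $N(x)$ term vanishes, $\alpha=0$, so the surface is a quadric (possibly degenerate) in $\PG(3,q)$.

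Now classify by the quadratic part $Q(x)=\mathrm{Tr}(\beta x^{q+1})$, which in the conjugate coordinates is $\beta x^qx^{q^2}+\beta^q x^{q^2}x+\beta^{q^2}xx^q$. First, if $\beta=0$, the surface is the plane $\mathrm{Tr}(\gamma x)+\delta=0$. If $\beta\neq0$, then $Q$ vanishes at the three points where two of $x,x^q,x^{q^2}$ are zero, namely the three conjugate points. Hence the conic at infinity, $Q=0$ in $\pi_\infty$, contains them, and it is nondegenerate, since its matrix has determinant $2N(\beta)\ne0$ for $q$ odd (with the usual adjustment for $q$ even). The full quadric is then a cone or hyperbolic according to whether it is degenerate. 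Counting points separates the cases: the surface has $q^2+q+1$ affine points when the linear set is maximum scattered, giving $q^2+2q+1$ points and hence hyperbolic; in the club case it has $q^2$ affine points, giving a cone. Elliptic quadrics are excluded because the affine point count must be $q^2$ or $q^2+q$.

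The head assertions follow as well. A club with head $P_\infty$ gives $\beta=0$, by the count of affine points $q^2$ and the fact that $\pi_\infty$ meets the representing plane in a line (the head line). The vertex of the cone corresponds to the point at which the linear set's representing plane meets a spread element in a line, that is, the head. The converses follow because the three cases are exhaustive and mutually distinguished by point counts. The main obstacle is deriving the Sherk equation with $\alpha=0$ explicitly and verifying nondegeneracy uniformly in $q$ (including $q=2,3$ and even $q$). This uniformity is precisely what removes the hypotheses $q\ge4$, $q\ge5$ of \cite{DDVDV}. I would handle it by writing the defining condition $\det(x\mu-\lambda,\ldots)=0$ over the conjugates and expanding.
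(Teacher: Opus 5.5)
Your derivation of the forward direction is sound, and more self-contained than the paper's appeal to the Sherk-surface results. The Moore determinant gives the equation $\mathrm{Tr}(\beta x^{q+1})+\mathrm{Tr}(\gamma x)+\delta=0$, and its $N(x)$-coefficient vanishes exactly because $P_\infty\in\D$. Reading off plane/cone/hyperbolic from $\beta$, the conic at infinity, and the affine counts $q^2$, $q^2$, $q^2+q$ (an elliptic quadric would give $q^2-q$) then works. Two slips: before using $P_\infty\in\D$ the determinant has degree $3$, and a maximum scattered set has $q^2+q$ affine points, not $q^2+q+1$.

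The first genuine gap is the head. Point counts cannot separate a club with head $P_\infty$ from a club with head $H\neq P_\infty$, since both have $q^2$ affine points. So your claim that the cases are ``mutually distinguished by point counts'' fails for parts 1 and 2. Your remarks about ``the head line'' and about the vertex being where ``the representing plane meets a spread element in a line'' are assertions, not arguments. As a result, part 1, the head clause of part 2, and the vertex statement are unproved. The paper gets part 1 by noting that a plane of the $3$-space $\Gamma\supset\pi_\infty$ visibly gives a club with head $P_\infty$, and then counting ($q^3+q^2+q$ such clubs and such planes). It gets the vertex from the fact that twisted cubics on a cone pass through its vertex ($q\geq 3$), or from Sherk's map $\Delta$ ($q=2$). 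In your coordinates this is a direct computation valid for every $q$:
\begin{itemize}
\item If $P_\infty$ is the head, take $U=\langle(1,0),(a,0),(c,d)\rangle_{\Fq}$. Its affine points form the plane $c/d+\langle 1/d,a/d\rangle_{\Fq}$.
\item If $H\neq P_\infty$, translate $H$ to $x=0$. Then $U=\langle(1,0),(0,f_1),(0,f_2)\rangle_{\Fq}$ with $\langle f_1,f_2\rangle_{\Fq}=\{y:\mathrm{Tr}(\gamma y)=0\}$. The affine points are $0$ together with the $x\neq 0$ satisfying $\mathrm{Tr}(\gamma x^{-1})=0$, i.e.\ the cone $\mathrm{Tr}(\gamma x^{q+q^2})=0$ with vertex $0$.
\end{itemize}

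The second gap is the converse. Your argument only shows that a set already known to be a rank $3$ linear set has the stated type. That suffices for the statement read literally, once the head issue is fixed. But the paper proves, and later uses, a stronger fact: every plane $\neq\pi_\infty$, every cone with affine vertex over a conic of the stated type, and every such hyperbolic quadric is the representation of some rank $3$ linear set. It uses this when it realises Bruen--Hirschfeld pencils as pairs of maximum scattered linear sets, and the two cones of Lemma~\ref{118} as two clubs. In your language, you need $U\mapsto(\beta,\gamma,\delta)$ to hit all these quadrics.
\begin{itemize}
\item For planes and cones the computations above give this directly. A cone with vertex $0$ over such a conic has equation $\mathrm{Tr}(\beta x^{q+1})=0$, equivalently $\mathrm{Tr}(\beta^{q}x^{q+q^2})=0$, so take $\gamma=\beta^q$.
\item For hyperbolic quadrics you need an extra argument. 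The paper compares the $\frac{1}{2}q^3(q^3-1)$ such quadrics with the $\binom{q^3}{2}$ maximum scattered linear sets through $P_\infty$ (one per pair of carriers), using that distinct linear sets have distinct representations.
\end{itemize}
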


\begin{proof} Let $\D$ be a club of $\PG(1,q^3)$  that contains the point $P_\infty$. By
\cite[Thm 8.1]{BJtgt1}, in the $\PG(3,q)$ BB-representation,  $\D$ is represented by a Sherk surface of size $q^2+1$; and by Sherk \cite{sher86} this is either a  plane distinct from $\pi_\infty$, or a quadratic cone that meets $\pi_\infty$ in a    non-degenerate  conic  whose cubic extension contains the three conjugate points defining the plane $\pi_\infty$. 

We consider the $\PG(5,q)$ field reduction representation of $\PG(1,q^3)$, so the point $P_\infty\in\PG(1,q^3)$ corresponds to a plane $\pi_\infty$. A 3-space $\Gamma$  that contains $\pi_\infty$ meets this $\PG(5,q)$ field reduction representation in  a $\PG(3,q)$ BB-representation of $\PG(1,q^3)$. It follows from the definition of a club that if 
$\pi$ is a plane in $\PG(3,q)$ that is distinct from $\pi_\infty$, then $\pi$ corresponds to a club of $\PG(1,q^3)$ that has head $P_\infty$. So we have shown that a club with head $P_\infty$ has a  $\PG(3,q)$  BB-representation which is either an affine plane or a certain type of quadratic cone; further an affine plane of $\PG(3,q)$ represents a club of $\PG(1,q^3)$ with head $P_\infty$. To complete the proof of part 1, we count. (Recall that a club in $\PG(1,q^3)$ has a unique head by \cite[Thm 4.1]{BJtgt1}.) The number of clubs with head $P_\infty$ is $q(q^2+q+1)$ (see \cite[Thm 5.2]{BJtgt1}). This matches the number of  affine planes in $\PG(3,q)$, distinct from $\pi_\infty$, which is also $q^3+q^2+q$. This completes the proof of part 1.

As a club in $\PG(1,q^3)$ has a unique head, it follows that a club with head distinct from $P_\infty$ is represented in $\PG(3,q)$  by a  quadratic cone that meets $\pi_\infty$ in a   conic  whose cubic extension contains the three conjugate points defining the plane $\pi_\infty$. 
We prove the converse by counting. Given a point $Y\neq P_\infty$ in $\PG(1,q^3)$,  the number of clubs with head $Y$ that contain $P_\infty$ is $q^2+q+1$ (to show this, count  incident pairs $(X,\D)$ where $X$ is a point distinct from $Y$ and $\D$ is a club with head $Y$). On the other hand, it is straightforward to show there are $q^2+q+1$ non-degenerate conics in $\pi_\infty$ whose cubic extension contains the three conjugate points defining $\pi_\infty$.  Hence given an affine point $V\notin\pi_\infty$, there are $q^2+q+1$ quadratic cones with vertex $V$ that meet $\pi_\infty$ in a conic  whose cubic extension contains the three conjugate points defining the plane $\pi_\infty$. This completes the proof of the iff statement in part 2.  

Finally we show that the vertex of the quadratic cone corresponds to the head of the club. If $q\geq3$, then by Result~\ref{333}, the head of a club $\D$ lies in $q^2+q$ $\Fq$-lines that are contained in $\D$; and every other point of $\D$ lies in $q+1$ $\Fq$-lines that are contained in $\D$. By Result~\ref{125}, in the $\PG(3,q)$ BB-representation, these $\Fq$-lines corresponds to either affine lines, or certain twisted cubics that lie in the quadratic cone. By \cite[Lemma 21.1.7]{Hirsch2}, all twisted cubics contained in a quadratic cone contain the vertex. 
It follows that the vertex of the cone corresponds to the head of the club. When $q=2$, every 3 points form an $\Fq$-line, so we cannot geometrically identify the head in this way. However, an algebraic proof can be used: using the notation from Sherk \cite{sher86} we briefly note that the map $\Delta$  maps the infinite plane $\pi_\infty$ to the point $(0,0,0,1)$; and maps the plane $\S(0,0,1,0)$ (which corresponds to a club with head $P_\infty=(1,0)$) to the quadratic cone $\S(0,1,0,0)$ which has vertex $(0,0,0,1)$  (this corresponds to a club with head $(0,1)$).  We can conclude that the head of the club corresponds to the vertex of the cone.

To prove part 3, let $\D$ be a maximum scattered linear set of $\PG(1,q^3)$ that contains $P_\infty$. By 
 \cite[Remark 3.7]{BJ-2012}, in the $\PG(3,q)$ BB-representation,  $\D$ is represented by a  Sherk surface of size $q^2+q+1$; and by Sherk \cite{sher86} this is a hyperbolic quadric that meets $\pi_\infty$ in a non-degenerate  conic  whose cubic extension contains the three conjugate points defining the plane $\pi_\infty$. To show the converse, we count. The number of hyperbolic quadric that meets $\pi_\infty$ in a   conic  whose cubic extension contains the three conjugate points defining the plane $\pi_\infty$ is $\frac{1}{2}q^3(q^3-1)$ (see \cite[Prop 4.4]{DDVDV}). We next count the number of maximum scattered linear sets in $\PG(1,q^3)$ that contain the point $P_\infty$, the count in 
 \cite[Prop 4.5]{DDVDV} needs $q\geq5$, so we use a different technique to remove the condition on $q$. 
 We recall from Section~\ref{sec2.3} that a maximum scattered linear set has two unique carrier points, and given two points $E,F$, there are exactly $q-1$ (pairwise disjoint) maximum scattered linear sets with carriers $E,F$. So the number of maximum scattered linear sets that contain $P_\infty$ is equal to the number of choices for two carrier points (distinct from $P_\infty$) which is ${{q^3}\choose{2}}$. 
%
%
%
We conclude that part 3 holds for all $q$. 
\end{proof}

 We note that if  $\D$ is an $\Fq$-linear set of rank 3 of $\PG(1,q^3)$ with $P_\infty\notin\D$, then Sherk \cite{sher86} shows that in the  $\PG(3,q)$ BB-representation, $\D$ corresponds to  a cubic surface.

Recall from Result~\ref{333} that if $q\geq3$, a maximum scattered linear set in $\PG(1,q^3)$ contains two families of $\Fq$-lines. The next result describes these families in the $\PG(3,q)$ BB-representation.  

\begin{lemma}\label{117}
Let $\D$ be a maximum scattered linear set in $\PG(1,q^3)$, $q\geq3$, and denote the two families of $\Fq$-lines in $\D$ by $\F_1$, $\F_2$. 
Let $\H$ be the hyperbolic quadric corresponding to $\D$ in the $\PG(3,q)$ BB-representation. Then we can label the  two reguli of $\H$   by $\R_1,\R_2$ such that the following hold.
\begin{enumerate}
\item The set $b$, with $P_\infty\in b$, is an  $\Fq$-line in family $\F_1$   iff the BB-representation of $b$ is  a line of the  regulus $\mathcal R_1$. 
\item The set $b$, with $P_\infty\notin b$, is an  $\Fq$-line in family $\F_1$    iff the BB-representation of $b$ is   a  twisted cubic curve $\N_1$ of $\H$  such  that: the cubic extension of $\N_1$ contains the three conjugate points defining the plane $\pi_\infty$;   the lines of $\mathcal R_1$ are non-tangential unisecants of $\N_1$; and the lines of the $\R_2$ are chords of $\N_1$. 
\end{enumerate}
The $\Fq$-lines in $\F_2$  can be similarly described. 

\end{lemma}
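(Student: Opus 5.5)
We would work inside the $\PG(3,q)$ BB-representation with point at infinity $P_\infty\in\D$, where by Theorem~\ref{001}(3) the set $\D$ corresponds to a hyperbolic quadric $\H$ meeting $\pi_\infty$ in a conic $\C$. By Result~\ref{125}, an $\Fq$-line of $\D$ through $P_\infty$ corresponds to an affine line of $\PG(3,q)$ lying in $\H$. Such a line lies in one of the two reguli of $\H$. An $\Fq$-line not through $P_\infty$ corresponds to a twisted cubic contained in $\H$ whose cubic extension contains the three conjugate points defining $\pi_\infty$. The plan is to label the reguli via the $\Fq$-lines through $P_\infty$, and then to propagate the labelling to the twisted cubics using the intersection properties in Result~\ref{333}(3).

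\textbf{Step 1: the lines through $P_\infty$.} By Result~\ref{333}(3), $P_\infty$ lies on exactly one $\Fq$-line of each family. Every line of $\H$ meets $\pi_\infty$ in a point of $\C$, and no line of $\H$ lies in $\pi_\infty$ since $\H\cap\pi_\infty$ is a non-degenerate conic. Hence each of the $2(q+1)$ lines of $\H$ is an affine line and so represents an $\Fq$-line through $P_\infty$ contained in $\D$. Comparing counts, the $\Fq$-lines through $P_\infty$ are exactly these $2(q+1)$ lines, and each family contributes $q+1$ of them.

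Two $\Fq$-lines through $P_\infty$ in the same family meet only in $P_\infty$, so their BB-representations share no affine point. Two lines of the same regulus are skew, while lines of opposite reguli meet in exactly one point. That point is affine unless it lies on $\C$, which happens for only one partner of each line. A counting or parity argument should then show that the $q+1$ lines of $\F_1$ through $P_\infty$ are pairwise disjoint in the affine part. Such lines cannot come from both reguli, since a line from each regulus would meet in an affine point for all but one pairing. Hence they form a single regulus, which we call $\R_1$. This proves part 1.

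\textbf{Step 2: the twisted cubics.} Let $b\in\F_1$ with $P_\infty\notin b$, represented by a twisted cubic $\N_1\subset\H$. A twisted cubic on a hyperbolic quadric is a curve of bidegree $(1,2)$: the lines of one regulus are unisecants and the lines of the other are chords. We will determine which regulus plays which role using Result~\ref{333}(3). The line $b$ meets each line $m\in\F_1$ through $P_\infty$ in exactly one point, and that point is not $P_\infty$, so it is an affine point of $\N_1\cap m$. Hence each line of $\R_1$ meets $\N_1$ in exactly one point, so the lines of $\R_1$ are unisecants and those of $\R_2$ are chords. The unisecants are non-tangential, since a unisecant of a twisted cubic on a quadric is not tangent to the cubic; this can be checked directly from the bidegree.

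For the converse, take a twisted cubic $\N_1$ of this type. By Result~\ref{125} it represents an $\Fq$-line $b$ contained in $\D$ with $P_\infty\notin b$, and $b$ lies in $\F_1$ or $\F_2$. If $b\in\F_2$, the argument above with the roles of the families exchanged would make the lines of $\R_2$ its unisecants, contradicting the assumption that they are chords. This gives the converse of part 2. Exchanging indices gives the description of $\F_2$.

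\textbf{Main obstacle.} The delicate point is Step 1: showing that the family structure of Result~\ref{333}(3) matches the regulus structure exactly. The opposite-regulus pair meeting at infinity on $\C$ needs care. The cleanest approach is to argue that two lines meeting in an affine point must come from different families, since same-family lines meet only in $P_\infty$. Then each family, being $q+1$ pairwise affinely skew lines on $\H$, is forced to be a regulus, because lines from both reguli would meet affinely for all but at most one pairing when $q\ge3$. The bidegree claim for twisted cubics on $\H$ can be cited from Hirschfeld.
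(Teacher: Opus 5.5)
Your route is the paper's: Result~\ref{125} to pass to $\PG(3,q)$, Result~\ref{333}(3) to separate the families, and Hirschfeld's Lemma~21.1.7 for twisted cubics on $\H$. Step~1 is sound. It is more careful than the paper's proof, which asserts that any two lines from opposite reguli represent $\Fq$-lines meeting in two points and overlooks the pair meeting on $\C$. One slip: $P_\infty$ lies on $q+1$ lines of each family, not one. Result~\ref{333}(3) gives one line of each family through $P_\infty$ and any further point.

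The gap is in Step~2, at ``each line of $\R_1$ meets $\N_1$ in exactly one point, so the lines of $\R_1$ are unisecants''. Your converse rests on the same inference. A tangent of $\N_1$ is a degenerate chord, and it also meets $\N_1$ in exactly one point. For $q$ odd this is easily patched. The tangents in the chord regulus are the branch points of the $2$-to-$1$ map from $\N_1$ onto that regulus, so there are at most two of them, fewer than $q+1$.

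For $q$ even it is a real hole. In characteristic $2$ a chord regulus can consist entirely of tangents: the tangents of $(1,t,t^2,t^3)$ all lie on $x_0x_3=x_1x_2$, which contains the cubic. Such a cubic does represent an $\Fq$-line of $\D$. Identify $\H$ with $\PG(1,q)\times\PG(1,q)$, with reguli $u=\mathrm{const}$ and $v=\mathrm{const}$ and with $\C$ the diagonal. Let $P=(\theta,\theta)$, where $\theta$ has minimal polynomial $x^3+s_2x^2+s_1x+s_0$. Then $\N_T\colon v=h(u^2)$, with $h(w)=(s_2w+s_0)/(w+s_1)$, is a twisted cubic on $\H$ through $P,P^q,P^{q^2}$. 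Every line of \emph{both} reguli meets it in exactly one affine point. So the lines through $P_\infty$ cannot detect its family. The paper's one-line argument has the same defect; it even misquotes Result~\ref{333}(3) as saying that lines of different families meet in two points.

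One way to close the hole runs as follows.
\begin{itemize}
\item $\N_T$ is the only twisted cubic on $\H$ through $P,P^q,P^{q^2}$ whose $v$-lines are all tangent. All $q+1$ of them being tangent forces $u\mapsto v$ to be inseparable, i.e.\ $v=h(u^2)$, and then $h(\theta^2)=\theta$ determines $h$.
\item A cubic $u=f(v)$ of the opposite type meets $\N_T$ in the fixed points of $f\circ h\circ(u\mapsto u^2)$. Write $f\circ h=\tilde R/\tilde S$ with coprime binary quadratic forms. The fixed-point quintic $G=\tilde R(x^2,y^2)\,y+\tilde S(x^2,y^2)\,x$ has partial derivatives $\tilde S(x^2,y^2)$ and $\tilde R(x^2,y^2)$, which have no common zero.
\item So $G$ has five distinct roots: $\theta,\theta^q,\theta^{q^2}$, plus two that are either both rational or conjugate over $\mathbb{F}_{q^2}$. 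Hence $\N_T$ meets every cubic of the opposite type in $0$ or $2$ points.
\item Suppose $\N_T$ lay in the family whose lines through $P_\infty$ form the $v$-regulus. By your own argument and the uniqueness of $\N_T$, the other $q^2-1$ members of that family not through $P_\infty$ would all be of the opposite type. This contradicts Result~\ref{333}(3).
\end{itemize}
With this supplement, Step~2 and its converse go through.
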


\begin{proof} By Result \ref{125}, the lines of $\R_1$ and $\R_2$ correspond to $\Fq$-lines through $P_\infty$; and  the $\Fq$-lines in $\D_i$ containing $P_\infty$ correspond to generator lines of $\H$. If $\ell_1\in\R_1$ and $\ell_2\in\R_2$, then $\ell_1,\ell_2$ correspond to $\Fq$-lines of $\D$ which meet in two points, so by Result~\ref{333}(3), they belong to different families. Hence the lines of $\R_i$ correspond to $\Fq$-lines  of $\D$  through $P_\infty$ in family $\F_i$, $i=1,2$ and conversely.

By Result \ref{125}, the $\Fq$-lines contained in $\D$ that do not contain $P_\infty$ correspond exactly to  twisted cubics contained in $\H$ whose cubic extension contains the three conjugate points defining the plane $\pi_\infty$.
Let $\N$ be a twisted cubic contained in $\H$, then by \cite[Lemma 21.1.7] {Hirsch2}, all the lines of one regulus of $\H$ are chords of $\N$ and all the lines of the other regulus of $\H$  are  non-tangential unisecants of $\N$. As $\Fq$-lines contained  in the same family of $\D$ meet in exactly one point, and $\Fq$-lines in different families meet in two points (see Result~\ref{333}(3)), the correspondence in part 2 holds. Similarly for $\Fq$-lines in family $\F_2$. 
\end{proof}

We complete the preliminaries with a short existence result on the intersection of quadratic cones.

\begin{lemma}\label{118}  
 In $\PG(3,q)$, there exists two quadratic cones  whose intersection is a non-degenerate conic iff $q$ is even. 
\end{lemma}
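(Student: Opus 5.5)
The plan is to handle the two directions separately: a counting/geometric argument shows that $q$ odd is impossible, and an explicit construction handles $q$ even.

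\emph{Necessity.} Let $\K_1,\K_2$ be quadratic cones with vertices $V_1,V_2$, and suppose $\K_1\cap\K_2=\C$ is a non-degenerate conic lying in a plane $\sigma$. If $V_1=V_2$, then every line through the common vertex and a point of $\K_1$ lies in both cones. Hence $\K_1=\K_2$, which is not a conic. So $V_1\neq V_2$.

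The plane $\sigma$ contains $\C=\K_1\cap\sigma$. Since a cone meets a plane through its vertex in lines, neither vertex lies in $\sigma$. Consider the line $V_1V_2$, which meets $\sigma$ in a point $P$. If $P\in\C$, then $P\in\K_1$, so the whole line $V_1P$ lies in $\K_1$. This line contains $V_2\in\K_2$ and $P\in\K_2$, so it meets $\K_2$ in at least two points. Every line meets a quadric in $0,1,2$ or $q+1$ points, or lies in it. We need $V_2\notin\K_1$ for the intersection to be $\C$ (since $V_2\notin\sigma$), and here $V_2\in V_1P\subset\K_1$, a contradiction. So $P\notin\C$.

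Now count. Each point $Q\in\C$ gives the generator $V_1Q$ of $\K_1$, and this generator meets $\K_2$ only in $Q$. So $V_1Q$ is a tangent to $\K_2$ at $Q$, and therefore lies in the tangent plane $T_Q\K_2=\langle V_2,t_Q\rangle$, where $t_Q$ is the tangent line to $\C$ at $Q$ in $\sigma$. Hence $V_1\in\langle V_2,t_Q\rangle$ for all $Q\in\C$. This means the projection of $V_1$ from $V_2$ onto $\sigma$, namely the point $P$, lies on every tangent line $t_Q$. So all $q+1$ tangents to $\C$ are concurrent at $P\notin\C$, i.e.\ $P$ is the nucleus of $\C$. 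This forces $q$ even, since for $q$ odd each external point lies on only two tangents.

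\emph{Sufficiency.} For $q$ even, let $\C$ be a conic in $\sigma$ with nucleus $N$. Choose distinct points $V_1,V_2$ off $\sigma$ on a line through $N$, and let $\K_i$ be the cone joining $V_i$ to $\C$. We must show $\K_1\cap\K_2=\C$. A point $X\in\K_1\cap\K_2$ not in $\sigma$ lies on the line $V_1Q_1$ and on the line $V_2Q_2$ for some $Q_1,Q_2\in\C$. Then $X$ lies in the plane $\langle V_1,V_2,Q_1\rangle$, which meets $\sigma$ in the line $NQ_1$. Since $NQ_1$ is the tangent at $Q_1$, it meets $\C$ only at $Q_1$, so $Q_2=Q_1$. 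Then the lines $V_1Q_1$ and $V_2Q_1$ meet only in $Q_1\in\sigma$, a contradiction. A point of the intersection in $\sigma$ lies in $\K_1\cap\sigma=\C$. So the intersection is exactly $\C$.

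The main obstacle is the necessity step, specifically justifying that the generator $V_1Q$ is tangent to $\K_2$ at the nonsingular point $Q$ and hence lies in the tangent plane $\langle V_2,t_Q\rangle$. I expect this to follow from standard quadric tangent-space facts.
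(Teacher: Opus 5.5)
Your proposal is correct and follows essentially the paper's route. In both, the necessity direction shows that the point $V_1V_2\cap\sigma$ lies on every tangent line of $\C$, so it is the nucleus, which is impossible for $q$ odd; the sufficiency direction places $V_1$, $V_2$ on a line through the nucleus. You phrase the key step via the tangent planes $\langle V_2,t_Q\rangle$ of $\K_2$ rather than the paper's projection from $V_1$, and you supply details the paper leaves implicit: the vertices are distinct and lie off $\sigma$, and a line meeting a quadric in exactly one point is tangent there.

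One small wording fix: in the $V_1=V_2$ step, take the lines through the common vertex and a point of $\C$, not of $\K_1$. This already puts a line inside $\K_1\cap\K_2$, which gives the contradiction directly.
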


\begin{proof}
%
%
%
Let $\Q_1,\Q_2$ be
two quadratic cones with vertices $V_1,V_2$ respectively. Suppose the intersection $\Q_1\cap\Q_2$ contains a   conic $\C$. Denote the plane containing $\C$ by $\pi$.  
Then $\Q_1,\Q_2$ meet exactly in $\C$ if and only if
when projecting from $V_1$ onto $\pi$, 
the generators of $\Q_2$ are projected to lines of $\pi$ which are tangent to $\C$. This latter condition only happens when $V_1V_2\cap \pi$ is the nucleus of $\C$, which is not possible when $q$ is odd. 
When $q$ is even, $\Q_1,\Q_2$ meet exactly in $\C$ when $V_1,V_2$ and the nucleus of $\C$ are collinear. Given a conic $\C$, it is always possible to choose $V_1,V_2$ collinear with the nucleus of $\C$, so we can always   construct  two quadratic cones that meet in exactly one conic. 
\end{proof}

\section{Main results}

\subsection{The intersection of two maximum scattered linear sets}

%
 
We  prove the following about the intersection of two maximum scattered linear sets  in $\PG(1,q^3)$.

\begin{theorem}\label{113} Let $\D_1,\D_2$ be two distinct maximum scattered linear sets in $\PG(1,q^3)$, then the following hold.
\begin{enumerate}
\item\label{113a} $|\D_1\cap\D_2|\in\{0,1, 2q,2q+1,2q+2\} \cup  \{n\,|\, q-2\sqrt{q} +1\leq n\leq q+ 2 \sqrt{q}+1  \mbox{ and } n \neq2,3 \}  $. 
\item\label{113b} If $q\geq3$ and $\D_1\cap\D_2$ contains an $\Fq$-line, then $\D_1\cap\D_2$ is two $\Fq$-lines. These two $\Fq$-lines belong to different families in $\D_1$ (and belong to different families in $\D_2$).
\item\label{113c} If    $q\geq5$ and  $|\D_1\cap\D_2|\in\{2q,2q+1,2q+2\} $ with  $|\D_1\cap\D_2|\neq10$,  then $\D_1\cap\D_2$  is two $\Fq$-lines.
\end{enumerate}
\end{theorem}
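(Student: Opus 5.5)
If $\D_1\cap\D_2$ is nonempty, I will choose a point $P_\infty\in\D_1\cap\D_2$ and work in the $\PG(3,q)$ BB-representation with $P_\infty$ at infinity. By Theorem~\ref{001}(3), $\D_1,\D_2$ are represented by hyperbolic quadrics $\H_1,\H_2$. Each meets $\pi_\infty$ in a non-degenerate conic $\C_1$, respectively $\C_2$, whose cubic extension contains the three conjugate points $X,X^q,X^{q^2}$ defining $\pi_\infty$. Then $|\D_1\cap\D_2|=1+|(\H_1\cap\H_2)\setminus\pi_\infty|$. The affine part of $\H_1\cap\H_2$ lies on a quartic curve, the intersection of two quadrics, whose points at infinity lie in $\C_1\cap\C_2$.

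The two conics $\C_1,\C_2$ share the three non-$\Fq$-rational conjugate points. Two distinct conics meet in at most four points, so $\C_1\cap\C_2$ consists of these three points together with at most one further point, which must be $\Fq$-rational. Hence either $\C_1=\C_2$, or $\C_1,\C_2$ meet in at most one real point. I will consider the pencil $\H_1+\lambda\H_2$ and split by the type of the intersection curve $\H_1\cap\H_2$.

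\textbf{Case (a): the curve is reducible.} Because the curve carries the conjugate triple at infinity, its possible splittings are limited. It can split into two generator lines together with a twisted cubic: a line from each regulus of each quadric, so the two lines meet, plus a twisted cubic through $X,X^q,X^{q^2}$. It can also split into two conics, or into a line and a twisted cubic. Counting affine points gives intersection sizes $2q$, $2q+1$ or $2q+2$. For example, two intersecting affine lines plus a twisted cubic give $1+(2q-1)+(q-\ldots)$ points, and I will sort out the overlaps. Some splittings yield smaller sizes such as $1$.

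\textbf{Case (b): the curve is absolutely irreducible.} Then it is a genus-one quartic, or a singular rational one. Hasse--Weil gives $q+1-2\sqrt q\le N\le q+1+2\sqrt q$ for the number $N$ of rational points, which matches the second set in part~\ref{113a}.

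Excluding $n=2,3$ is the subtle point. Result~\ref{333}(1) does not apply directly to these sizes, so I would instead argue as follows. If $|\D_1\cap\D_2|\in\{2,3\}$, choose two common points and apply Result~\ref{333}(3) in both $\D_1$ and $\D_2$. These points lie on $\Fq$-lines of each set. I would aim to show, via the BB-picture, that such a small intersection forces the curve in Case (b) to degenerate and produce more points. If that fails, a direct count of the rational points of the singular quartic should rule out $2$ and $3$. The value $0$ needs no argument.

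\textbf{Part~\ref{113b}.} Suppose $\D_1\cap\D_2\supseteq b$ for an $\Fq$-line $b$, and choose $P_\infty\in b$. By Lemma~\ref{117}, $b$ is a common generator line of $\H_1$ and $\H_2$, say in $\R_1$ of $\H_1$. The residual intersection is then a twisted cubic, or a line plus a conic, lying on both quadrics. I will show that the residual must be a twisted cubic through $X,X^q,X^{q^2}$, or a generator of the other regulus. Either way it represents a second $\Fq$-line. By Lemma~\ref{117}(2), it is unisecant or chordal to the relevant reguli, and this places the two $\Fq$-lines in different families of $\D_1$ and, symmetrically, of $\D_2$.

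\textbf{Part~\ref{113c}.} For $q\ge5$, the Hasse--Weil interval $[q+1-2\sqrt q,\,q+1+2\sqrt q]$ avoids $\{2q,2q+1,2q+2\}$ except for the coincidence $2q=10$ at $q=5$. So an intersection of one of these sizes must come from Case (a). Within Case (a), I will check that the only splittings of these sizes are two $\Fq$-lines.

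I expect the main obstacle to be the classification in Case (a). It requires controlling every degenerate intersection of two hyperbolic quadrics with prescribed behaviour at infinity, and verifying which configurations are realizable.
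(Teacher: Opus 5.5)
\textbf{There are genuine gaps.} Your framework is the paper's: put a common point at infinity, intersect $\H_1$ and $\H_2$, treat reducible bases by hand, and use Hasse--Weil for an irreducible quartic. But the reducible case is never actually classified, and part of what you wrote is wrong.

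\emph{The reducible case (Case~(a)).} ``Two generator lines plus a twisted cubic'' has degree $5$, which is impossible for the intersection of two distinct quadrics. You never rule out four lines, which could give sizes outside the list. You omit two lines plus a conic. You never derive the two-conic sizes $1,q,q+1,q+2$, and parts~(a) and~(c) both depend on them. Part~(b) has the same hole: a residual cubic consisting of three lines is not excluded.

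The paper takes the list of bases from the Bruen--Hirschfeld classification for non-singular pencils. It prunes that list with two facts you do not exploit. First, $P,P^q,P^{q^2}$ lie on the cubic extension of $\S$ but on no rational line. Second, the order-$3$ Frobenius cannot swap two conics. Hence in the reducible case the component carrying $P$ is a rational conic in $\pi_\infty$ or a twisted cubic. Four lines cannot occur; directly, a line through $P$ would have Frobenius orbit of size exactly $3$, giving three pairwise skew lines common to both quadrics, so $\H_1=\H_2$. Note also that $\C_1\neq\C_2$ meet in \emph{exactly} one rational point, not at most one. You need this in Case~(b) to get $|\D_1\cap\D_2|=N$ rather than $N$ or $N+1$.

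\emph{Excluding $n=2,3$.} This is the second gap, and your fallback of counting cannot work. A singular irreducible quartic has $q$, $q+1$ or $q+2$ rational points. An elliptic curve over $\mathbb{F}_2$ or $\mathbb{F}_3$ can have $2$ or $3$ points. For $q\le 3$, two-conic bases with $\C_1=\C_2$ could a priori also give these sizes, and your Case~(b) never sees them.

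The missing idea is to split on whether $\C_1=\C_2$.
\begin{itemize}
\item If $\C_1=\C_2$: the only bases giving $2$ or $3$ are Bruen--Hirschfeld cases 3b and 3c with $q\le 3$. Those pencils do not contain two hyperbolic quadrics.
\item If $\C_1\neq\C_2$: let $X$ be the unique rational point of $\C_1\cap\C_2$, let $\ell_i,m_i$ be the generators of $\H_i$ through $X$ with $\ell_1\neq\ell_2$, and put $\pi=\langle\ell_1,\ell_2\rangle$. Three sub-cases arise.
\begin{itemize}
\item If $\pi$ contains both $m_1$ and $m_2$, it is tangent to both quadrics at $X$. This gives a common tangent to $\C_1,\C_2$ at $X$, forcing $\C_1=\C_2$.
\item If $\pi$ contains neither, then $\pi\cap\H_i=\ell_i\cup n_i$ with $n_1\neq n_2$. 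The points $\ell_1\cap n_2$, $n_1\cap\ell_2$ and $n_1\cap n_2$ are affine points of $\H_1\cap\H_2$.
\item If $\pi$ contains exactly one of them, replace $\pi$ by $\langle\ell_1,m_2\rangle$ (or $\langle m_1,\ell_2\rangle$), which reduces to the previous sub-case.
\end{itemize}
This is how the paper obtains $|\D_1\cap\D_2|\ge 4$.
\end{itemize}
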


\begin{proof}
Suppose $|\D_1\cap\D_2|>0$. Without loss of generality, assume $P_\infty\in\D_1\cap\D_2$ and 
consider the BB-representation of $\PG(1,q^3)$ in $\PG(3,q)$, with the plane at infinity denoted $\pi_\infty$.  
By Theorem~\ref{001}, for $i=1,2$, $\D_i$ corresponds  in $\PG(3,q)$ to a  hyperbolic quadric $\H_i$ that meets $\pi_\infty$ in a conic whose cubic extension contains the three conjugate points defining the plane $\pi_\infty$, denote these points by  $P,P^q,P^{q^2}$.   

We now consider the intersection $\S=\H_1 \cap \H_2$. We use the classification given by Bruen and Hirschfeld \cite{BH} which is briefly summarised as follows.  Let $\Q_1,\Q_2$ be two quadrics in $\PG(3,q)$ with quadratic forms $F,G$ respectively. Then the \emph{pencil} of quadrics $\Gamma=F+\lambda G$,  $\lambda \in\Fq\cup\{\infty\}$ has \emph{base} a curve  $\K$ of degree 4. That is, the quadrics in the pencil $\Gamma$ pairwise meet in $\K$. In the case where at least one of the quadrics in the pencil is non-singular (which is our situation as $\H_1,\H_2$ are non-singular hyperbolic quadrics), \cite{BH} lists all the possibilities for the base. We use the notation for cases given in \cite{BH}, which considers the different possible ways the base can reduce. In particular, $\S$ is the set of points occurring as one of the following bases:
\begin{enumerate}
\item four lines (cases 1a, 1b, 1c, 1e, 1d, 1f, 1g, 1h, 1i), 
\item two lines and a non-degenerate conic (cases 2a, 2b, 2c, 2d), 
\item two non-degenerate conics (cases 3a, 3b, 3c, 3d, 3e, 3f, 3g), 
\item a twisted cubic and one of its chords (cases 4a, 4b, 4c), 
\item an irreducible quartic curve. 
\end{enumerate}
In order to analyse these five cases in more detail, we call points and lines \emph{rational} if they lie in $\PG(3,q)$, and \emph{non-rational} if they are in an extension if $\PG(3,q)$.  

Note that as $\S=\H_1\cap\H_2$, the cubic extension of $\S$ to $\PG(3,q^3)$ contains the three conjugate points $P,P^q,P^{q^2}$ defining $\pi_\infty$. Also note that the   three points  $P,P^q,P^{q^2}$ do not lie on the extension of any rational line of $\PG(3,q)$. These two properties allow us to eliminate many of the cases in the above list.

We first consider case 1:  suppose $\S$ is the set of points on four lines. 
As  the points $P,P^q,P^{q^2}$ lie  in the cubic extension of $\S$, but do not lie on any rational lines, $\S$ contains at least three distinct non-rational lines, moreover these lines lie in the cubic extension $\PG(3,q^3)$. The configurations in \cite{BH} in case 1 consist of either rational lines; or pairs of lines which are conjugate over $\Fqq$; or 4-sets of lines which are conjugate over $\Fqqqq$. Thus $\S$ is not any of the configurations in case 1, so $\S$ is not four lines. 

Now consider case 2. The description in \cite{BH} shows that cases 2b, 2d only occur if the pencil of quadrics contains at most one hyperbolic quadric, so these cases do not occur for $\S$ (which is the base of a pencil that contains at least two hyperbolic quadrics, namely $\H_1,\H_2$).
  In cases 2a, 2c, $\S$ is the set of points lying on two rational lines $\ell,m$ and a rational non-degenerate conic $\C$. 
As $P,P^q,P^{q^2}$ lie in the extension of $\S$ and do not lie on any rational line, they lie on the cubic extension of $\C$. Hence $\C$ lies in 
the plane $\pi_\infty$. So $\S$ has $2q-1,2q$ affine points in cases 2c, 2a respectively, hence $|\D_1\cap\D_2|\in\{2q,2q+1\}$. If $q=2$, then $\D_1\cap\D_2$ is two $\Fq$-lines. If  $q\geq3$, then by Result~\ref{125}, the rational  lines $\ell,m$ correspond to $\Fq$-lines in the intersection 
 $\D_1 \cap \D_2 $; and by Lemma~\ref{117}, these two lines belong to different families in $\D_1$ (and belong to different families  $\D_2$).

Now consider case 3: suppose that $\S$ is the set of points on two non-degenerate conics $\C,\C'$. So $\C,\C'$  are either both rational, or they are a conjugate pair over $\Fqq$. 
As $P,P^q,P^{q^2}$ lie in the cubic extension of $\S$, without loss of generality assume that $P$ lies in the cubic extension   of $\C$.
Let $\alpha$ denote the  collineation of $P\Gamma L(4,q^3)$ induced by the field automorphism  $X^\alpha=X^q$.
The map $\alpha$ fixes $\PG(3,q)$,  so fixes the curve $\S$. Hence $\alpha$ either fixes both $\C$ and $\C'$, or interchanges $\C$ and $\C'$. The latter is not possible as $\alpha$ has order 3. Hence $\alpha$  fixes both $\C$ and $\C'$. Thus they are both rational conics and so cases 3d, 3e, 3f are not possible. 
 As $\alpha$ fixes $\C$,  we have $P^\alpha=P^q$ (and similarly $P^{q^2}$) lies in the cubic extension   of $\C$. That is, $\C$ lies in $\pi_\infty$. In cases 3a, 3b, 3c, 3g we have   $|\C\cap \C'|=x$ where $x=0,1,2,q+1$ respectively. Hence $\S$ contains $q+1, q, q-1,0$ affine points respectively. That is, if case 3 occurs, then $|\D_1\cap\D_2|\in \{q+2,q+1,q,1\}$. Further if $q\geq3$, then as $\S$ does not contain a line or a twisted cubic,  $\D_1\cap\D_2$  does not contain an $\Fq$-line by Result \ref{125}.

 Now consider case 4: suppose $\S$ is the set of points on a twisted cubic $\N$ and a chord $\ell$. As  the points $P,P^q,P^{q^2}$ lie  in the cubic extension of $\S$, but do not lie on any rational lines, they must lie on the cubic extension of $\N$. 
 So $\pi_\infty\cap\N$ does not contain a rational point. As $\ell$ does not lie in $\pi_\infty$, in cases 4a, 4b, 4c, $\S$ contains $2q-1,2q,2q+1$ affine points respectively. Hence $|\D_1 \cap \D_2 |\in\{2q,2q+1,2q+2\}$. 
If $q=2$, then $\D_1\cap\D_2$ is two $\Fq$-lines. If  $q\geq3$,  then by Result~\ref{125}, $\D_1\cap\D_2$ is two $\Fq$-lines, which  by Lemma \ref{117}  belong to different families in $\D_1$ (and belong to different families  $\D_2$).

Finally we consider case 5: suppose that 
$\S$ is an irreducible quartic curve. The plane $\pi_\infty$ meets $\S$ in four points (possibly repeated, possibly in an extension). As  $P,P^q,P^{q^2}$ are on the cubic extension of $\S$, 
 $\pi_\infty\cap\S$  contains exactly one rational point.
Hence $|\D_1 \cap \D_2| = |\S|$. As discussed in \cite{BH}, we can project the quartic curve $\S$ onto a plane, so a bound on the number $n=|\S|$ is given by the Hasse-Weil theorem, namely $
 (\sqrt q-1)^2 \leq n \leq (\sqrt q+1)^2 
$ (see \cite[Appendix IV]{Hirsch2} for details). Further, $\S$ does not contain a line or a twisted cubic, so  if $q\geq3$, then  by Result~\ref{125}, $\D_1\cap\D_2$ does not contain an $\Fq$-line. 

In summary, we have shown  the following are the possible intersection sizes when $\D_1\cap\D_2\neq\emptyset$.
\begin{itemize}
\item $|\D_1\cap\D_2|\in\{2q,2q+1,2q+2\}$, then $\D_1\cap\D_2$ is two $\Fq$-lines. If $q\geq3$,  these belong to different families of $\D_1$ (and belong to different families  $\D_2$). 
\item $|\D_1\cap\D_2|\in\{1,q,q+1,q+2\}$ and  if $q\geq3$, then $\D_1\cap\D_2$ does not contain an $\Fq$-line.
\item $|\D_1\cap\D_2|=n$ where $q-2\sqrt q+1 \leq n \leq q+2\sqrt q+1$, and  if $q\geq3$, then $\D_1\cap\D_2$ does not contain an $\Fq$-line.
\end{itemize}
This completes the proofs of part \ref{113a} (except the statement that $n\neq 2,3$) and part \ref{113b}. Part \ref{113c} follows by noting that 
  if $q=5$, then $q+2\sqrt q+1 < 2q+1$; and if $q>5$, then   $q+2\sqrt q+1 <2q$.

The remaining statement to prove is that  two maximum scattered linear sets cannot meet in exactly 2 or 3 points.  The remainder of this proof will prove this statement.  
Denote the conic  $\H_i\cap\pi_\infty$ by $\C_i$, $i=1,2$. We consider two cases, depending on whether $\C_1,\C_2$ are equal or distinct.
Suppose $\C_1=\C_2$, so $\H_1\cap\H_2$ contains a non-degenerate conic $\C_1=\C_2$. 
The base $\S$ contains a non-degenerate conic  in the following cases: 3g, 3c, 3b, 3a, 2c, 2a which give  $|\D_1\cap\D_2|$ equal $1,q,q+1,q+2, 2q,2q+1$ respectively. So we only need consider case 3c when $q=2,3$ and case 3b when $q=2$. Looking at the table in \cite{BH}, we see in case 3c when $q=2,3$ and case 3b when $q=2$, the associated pencil does not contain two hyperbolic quadrics. That is, the associated intersection sizes of $2,3$ do not occur. 
 We conclude that if $\C_1=\C_2$, then 
 $|\D_1\cap\D_2|\neq 2,3$. 

Now suppose $\C_1\ne \C_2$. As the cubic extensions of $\C_1,\C_2$ both contain the three conjugate points $P,P^q,P^{q^2}$, the two conics  meet in exactly one (rational) point, so let $X=\C_1\cap\C_2$.  We will show that in this case $|\D_1\cap\D_2|\geq4$, that is, we show that $\H_1\cap\H_2$ contains  at least 3  affine points.
  Denote the two lines of $\H_i$ through the point $X$ by $\ell_i,m_i$, $i=1,2$. 
Without loss of generality, assume $\ell_1\neq \ell_2$.
Consider the plane $\pi=\langle\ell_1,\ell_2\rangle$. We consider the three possibilities: either both $m_1,m_2$ are contained in $\pi$; neither $m_1,m_2$ are contained in $\pi$, or exactly one of $m_1,m_2$ is contained in $\pi$.
First, suppose both $m_1,m_2$ are contained in $\pi$, then $\pi$ is a tangent plane of both $\H_1$ and $\H_2$. Hence  the line $\pi\cap\pi_\infty$ is tangent to both $\C_1$ and $\C_2$ at the point $X$. As $\C_1,\C_2$ share the rational point $X$ and three conjugate points in the cubic extension, we have $\C_1=\C_2$, a contradiction. 
Next, suppose neither $m_1,m_2$ are contained in $\pi$. Then for $i=1,2$, $\pi\cap\H_i$ is two lines, one of which is $\ell_i$, let the other be $n_i$. So $n_i$ contains a point of $\C_i$ distinct from $X$, hence $n_1\neq n_2$.   In this case the three affine points $\ell_1\cap n_2,n_1\cap\ell_2,n_1\cap n_2$ are distinct and lie in $\H_1\cap\H_2$. Hence $|\D_1\cap\D_2|\geq4$ as required.
Finally, suppose  $m_1$ is contained in $\pi$ and $m_2$ is not contained in $\pi$.
Consider the plane $\alpha=\langle \ell_1,m_2\rangle$. As $m_2$ is not contained in $\pi$, we have $\pi\neq\alpha$. That is, $\alpha$ contains the lines $\ell_1,m_2$ but does not contain the lines $m_1,\ell_2$. Repeating the previous argument with $\alpha=\pi$ shows that $|\D_1\cap\D_2|\geq4$ as required. This completes the proof that  two maximum scattered linear sets cannot meet in exactly 2 or 3 points.  
\end{proof}

We can use the  proof of Theorem~\ref{113} to show that the following intersection sizes do occur. Note that this is not a complete list of existence, there a maximum scattered linear sets whose intersection corresponds to an irreducible quartic curve, and these are not included in the list below.

\begin{corollary} 
 In $\PG(1,q^3)$, there exists two maximum scattered linear sets  which meet in   $x$ points for each $x\in\{0,1,q,q+1,q+2,2q,2q+1,2q+2\}$, with the following exceptions for small $q$: $x=0,1,2,3,4$ when $q=2$; $x=1,3$ when $q=3$; and $x=4$ when $q=4$.
\end{corollary}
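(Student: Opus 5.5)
We construct the required pairs in the $\PG(3,q)$ BB-representation, reversing the case analysis in the proof of Theorem~\ref{113}. By Theorem~\ref{001}(3), every hyperbolic quadric meeting $\pi_\infty$ in a conic whose cubic extension contains $P,P^q,P^{q^2}$ represents a maximum scattered linear set. So it is enough to exhibit two such quadrics $\H_1,\H_2$ whose intersection has the base type from \cite{BH} that yields each value of $x$. Recall from that proof that the affine part of $\H_1\cap\H_2$, together with $P_\infty$, gives $\D_1\cap\D_2$.

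\textbf{The values $x=q,q+1,q+2,1$ (cases 3c, 3b, 3a, 3g).}
\begin{enumerate}
\item Fix a conic $\C$ in $\pi_\infty$ whose cubic extension contains $P,P^q,P^{q^2}$; by the proof of Theorem~\ref{001} there are $q^2+q+1$ of these.
\item Choose a second plane $\sigma\neq\pi_\infty$ and a conic $\C'$ in $\sigma$ meeting $\C$ in $2$, $1$, $0$ points of the line $\sigma\cap\pi_\infty$, or choose $\sigma=\pi_\infty$ (so $\C'=\C$).
\item Take the pencil of quadrics containing $\C\cup\C'$. In homogeneous coordinates with $\pi_\infty:x_3=0$ and $\sigma:L=0$, this is $\lambda x_3L+F$, where $F$ is a fixed quadric through $\C\cup\C'$.
\item Find two distinct members of the pencil that are hyperbolic.
\end{enumerate}
For $\C\neq\C'$ these give $2q$, $2q+1$ and $2q+2$ rational points in total. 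Removing the $q+1$ points of $\C$ and adding back $P_\infty$ gives $x=q,q+1,q+2$ respectively. In case 3g the pencil is $F+\lambda x_3^2$, which gives $x=1$.

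The key check is that the pencil contains at least two hyperbolic quadrics. I would use the Bruen--Hirschfeld tables, which list the numbers of hyperbolic, elliptic and cone members in each case. Alternatively, I would count directly: a non-degenerate member is hyperbolic exactly when its discriminant is a nonsquare-type class (or the Arf invariant when $q$ is even). The discriminant is a polynomial of degree at most $4$ in $\lambda$, so I would count its values. The tables in \cite{BH} already flag the small-$q$ failures (3c with $q=2,3$; 3b with $q=2$), which matches the exceptions $x=2,3$ at $q=2$ and $x=3$ at $q=3$.

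\textbf{The values $x=2q,2q+1$ (cases 2c, 2a) and $x=2q+2$ (case 4c).}
\begin{enumerate}
\item Take two affine rational lines $\ell,m$ meeting $\C$ at distinct points. They must be skew (case 2a) or meet at a point of $\C$ (case 2c), so that $\C\cup\ell\cup m$ is the base of a pencil. An example is the pencil $F+\lambda x_3L$ with $L$ a suitable plane through $\ell$ and $m$, or the pencil of quadrics through $\C$ containing $\ell$ and $m$.
\item Alternatively, take $\H_1$ hyperbolic and use Lemma~\ref{117}. Let $\N$ be a twisted cubic on $\H_1$ whose extension contains $P,P^q,P^{q^2}$. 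Take a chord $\ell$ from $\R_2$ and the pencil of quadrics through $\N\cup\ell$. For case 4c, $\ell$ should meet $\N$ in two distinct rational points.
\item As in Step 4 of the first group, choose two hyperbolic members of the pencil, using \cite{BH}.
\end{enumerate}

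\textbf{The value $x=0$.} Use the Singer orbit partition from Section~\ref{sec2.3}. For $q\ge3$, two distinct orbits with the same carriers $E,F$ are disjoint maximum scattered linear sets, which gives $x=0$. For $q=2$ there is only one orbit, and this is consistent with the exception $x=0$ at $q=2$.

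\textbf{Main obstacle.} The hard part is the small-$q$ bookkeeping: showing that hyperbolic pairs exist except exactly in the listed cases. These are $x=1$ at $q=3$, and $x=4$ at $q=2$ and at $q=4$. For $q=4$, the value $x=4$ equals $q$ (case 3c). For $q=2$, the value $x=4$ equals $q+2$ (case 3a), and $x=1$ also fails at $q=2$.

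For these entries I would read off from the \cite{BH} tables the number of hyperbolic quadrics in each pencil type for that $q$. Where the table gives fewer than two, I would argue that no other base type yields that size either. For $q\le4$ the Hasse--Weil range and the remaining cases must be checked directly, possibly by a small computer search. Since the statement only asserts existence outside the exceptions, the essential work is verifying the positive cases. I expect confirming that the pencils have two hyperbolic members in cases 3a and 3g for all $q\ge3$, and in case 3c for $q\ge5$, to be the most delicate step.
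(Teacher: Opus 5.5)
Your strategy is the paper's: run the case analysis of Theorem~\ref{113} backwards, realise each base type with the relevant conic or twisted cubic through $P,P^q,P^{q^2}$, and read off from the Bruen--Hirschfeld counts whether the pencil has two hyperbolic members. The paper moves the tabulated pencils into position, whereas you build them around $\C\subset\pi_\infty$. However, several of your explicit configurations are wrong.

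\textbf{Case 2.} The lines $\ell,m$ can never be skew. If a pencil $\langle F,G\rangle$ has $\C\subset\pi_\infty$ in its base, then on $\pi_\infty$ both forms are multiples $aC,bC$ of the conic's equation. So $bF-aG=x_3L$ is a member of the pencil, and the rest of the base is the plane section $F\cap\{L=0\}$. Your own suggested pencil $F+\lambda x_3L$ already forces $\ell,m\subset\{L=0\}$. The two admissible configurations are:
\begin{itemize}
\item $\ell\cap m$ affine, with $\ell,m$ meeting $\C$ in distinct points: $2q-1$ affine points, so $x=2q$;
\item $\ell\cap m=X\in\C$, with $\langle\ell,m\rangle\cap\pi_\infty$ tangent to $\C$ at $X$: $2q$ affine points, so $x=2q+1$.
\end{itemize}
In particular, lines meeting at a point of $\C$ give $2q+1$, not $2q$.

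\textbf{Case 4c.} You have the wrong type of chord. If $\ell$ meets $\N$ in two rational points, the base has $2q-1$ affine points and you get $x=2q$ (case 4a). Moreover, the two cones with vertices at $\ell\cap\N$ lie in that pencil, leaving only $q-1$ hyperbolic members. For $x=2q+2$ you need $\ell\cap\N$ to be a conjugate pair over $\Fqq$. Then no rational cone contains $\N\cup\ell$ and all $q+1$ members are hyperbolic; this is why $x=2q+2$ survives at $q=2$.

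\textbf{Small-$q$ bookkeeping.} You also misread this, and it leaves a gap your approach cannot close. Each point off the base $\K$ lies on exactly one member, so
\[
\sum_Q(|Q|-|\K|)=q^3+q^2+q+1-|\K|.
\]
Case 3b has $|\K|=2q+1$ and one plane pair; case 3g has $|\K|=q+1$ and one repeated plane. In both, the count gives $k+2h=q$, where $k$ and $h$ are the numbers of cones and hyperbolic quadrics. So for $q=3$ both 3g and 3b have at most one hyperbolic member.
\begin{itemize}
\item For 3g this contradicts your claim that 3g works for all $q\ge3$, but it is harmless, since $x=1$ is excluded at $q=3$.
\item For 3b it is not harmless. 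You assert 3b fails only at $q=2$. Yet for $q=3$ the value $4=q+1$ is not among $1,q,q+2,2q,2q+1,2q+2$. So none of your constructions gives two maximum scattered linear sets meeting in $4$ points when $q=3$. The statement, which excludes only $x=1,3$ at $q=3$, claims such a pair exists. Only an irreducible quartic base could supply it, and you do not construct one.
\end{itemize}
The paper's proof has the same hole. It explicitly lists case 3b as failing for $q=2,3$, but neither adds $x=4$ to the exceptions for $q=3$ nor treats case~5. So that entry is not justified there either, and the exception list appears to be missing it.

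Apart from this, once the configurations above are corrected, the rest of your plan matches the paper's argument, including $x=0$ via two orbits with common carriers for $q\ge3$.
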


\begin{proof} As discussed in Section~\ref{sec2.3},  two maximum scattered linear sets with the same carriers are disjoint. So there do exist two maximum scattered linear sets with intersection size $0$ when $q\geq3$.

The proof of Theorem~\ref{113} shows that intersection sizes $1,q,q+1,q+2$ arise in cases 3g, 3c, 3b, 3a respectively; intersection size $2q$ arises in cases 2c and 4a; intersection size $2q+1$ arises in cases 2a and 4b; and intersection size $2q+2$ arises in case 4c. The table in \cite{BH} shows that there exists pencils with these bases for each case. However, for the base to arise as the intersection of two maximum scattered linear sets, we need the pencil to contain two hyperbolic quadrics. The table in \cite{BH} lists the number of hyperbolic quadrics in each pencil, and using this we see that there are some exceptions for small $q$. In particular, the following bases do not occur in a pencil that contains two hyperbolic quadrics: case 2c when $q=2$; case 3a when $q=2$, case 3b when $q=2,3$, case 3c when $q=2,3,4$; case 3g when $q=2,3$; case 4a when $q=2$. This gives the exceptions in the result.

When a pencil containing two hyperbolic quadrics with a certain base does exist, we can map it to an appropriate position to give our existence result. For example in case 4c, \cite{BH} gives the coordinates for a pencil whose base is a twisted cubic $\N$ and a line $\ell$ containing two points of $\N$.
Let $\alpha$ be a plane of $\PG(3,q)$ that contains no rational points of $\N$. So in the cubic extension, $\N$ meets $\alpha$ in three conjugate points $A, A^q,A^{q^2}$. Mapping $A, A^q,A^{q^2}$ to the three conjugate points $P,P^q,P^{q^2}$ defining $\pi_\infty$ maps the example from \cite{BH} to  an example of two hyperbolic quadrics that correspond to two maximum scattered linear sets meeting in $2q+2$ points.
\end{proof}

\begin{remark} {\textup{  Note that  the bound of $q\geq5$, $|\D_1\cap\D_2|\neq10$ given in Theorem~\ref{113}(\ref{113c}) is tight -- Magma \cite{magma} has been used to construct examples when $q=5$, of (a)   two maximum scattered linear sets that meet in 10 points with the intersection forming two $\Fq$-lines, and (b)  two maximum scattered linear sets that meet in 10 points with the intersection not containing any $\Fq$-lines.}}
\end{remark}
%

\subsection{The intersection of a maximum scattered linear set and a club}

%

We prove the following  about the intersection of a maximum scattered linear set and a club.

\begin{theorem}\label{114} In $\PG(1,q^3)$, let $\D_1$ be a maximum scattered linear set and let $\D_2$ be a club with head $H$. Then the following hold.
\begin{enumerate}
\item\label{114a}    $|\D_1\cap\D_2|\in\{0,1,2q,2q+1\} \cup \{n\,|\, q-2\sqrt{q} +1\leq n\leq q+ 2 \sqrt{q}+1  \mbox{ and } n \neq2 \}     $.

\item\label{114b} If  $q\geq3$ and $\D_1\cap\D_2$ contains an $\Fq$-line, then it is two $\Fq$-lines (which belong to different families of $\D_1$).
\item\label{114c} If  $q\geq5$ and  $|\D_1\cap\D_2|\in\{2q,2q+1\}$ and   $|\D_1\cap\D_2|\neq10$, then $\D_1\cap \D_2$ is two $\Fq$-lines.
\item\label{114d} If $H\in\D_1\cap\D_2$,
then   $|\D_1\cap\D_2|\in \{q,q+1,q+2,2q,2q+1\}$ (and $|\D_1\cap\D_2|\neq 2$). 
\end{enumerate}
\end{theorem}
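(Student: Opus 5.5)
Assume $\D_1\cap\D_2\neq\emptyset$. The plan is to follow the proof of Theorem~\ref{113}, using the freedom to choose where $P_\infty$ goes.

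\textbf{Case $H\notin\D_1$ (or $H\in\D_1$ but we avoid it).} When the intersection contains a point other than $H$, we place $P_\infty$ at such a point. By Theorem~\ref{001}, $\D_1$ becomes a hyperbolic quadric $\H_1$ and $\D_2$ a quadratic cone $\Q_2$ with vertex $V$ (representing $H$). Both meet $\pi_\infty$ in conics $\C_1,\C_2$ whose cubic extensions contain $P,P^q,P^{q^2}$. Since $\H_1$ is non-singular, the pencil spanned by $\H_1,\Q_2$ falls under the Bruen--Hirschfeld classification. We run the same elimination as before.
\begin{itemize}
\item Case 1 (four lines) is impossible, since $P,P^q,P^{q^2}$ cannot lie on the extension of a rational line.
\item Cases 2 and 4 give $\{2q,2q+1,2q+2\}$ affine-plus-one counts, with the rational conic lying in $\pi_\infty$.
\item Case 3 forces both conics to be rational, one of them in $\pi_\infty$, giving sizes in $\{1,q,q+1,q+2\}$.
\item Case 5 gives the Hasse--Weil range.
\end{itemize}
The new work is to eliminate $2q+2$ (case 4c) and to refine case 3 using the cone. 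In case 4 the base is a twisted cubic $\N$ plus a chord $\ell$. By \cite[Lemma 21.1.7]{Hirsch2}, a twisted cubic on a cone passes through $V$, and every generator of the cone is a unisecant or chord through $V$. I expect $\ell$ to be forced to be a generator through $V\in\N$, so that $\ell\cap\N$ contains $V$. This should leave only the counts $2q,2q+1$. I would check this against which of 4a--4c have a cone in the pencil, using the \cite{BH} table, which lists the singular members.

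\textbf{Parts \ref{114b} and \ref{114c}.} Part \ref{114b} follows as in Theorem~\ref{113}: rational lines and twisted cubics in the base correspond to $\Fq$-lines by Result~\ref{125}, and Lemma~\ref{117} shows they lie in different families of $\D_1$. Part \ref{114c} follows from the numerical comparison $q+2\sqrt q+1<2q$ for $q>5$.

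\textbf{Excluding size $2$, and part \ref{114d}.} This mirrors the end of the proof of Theorem~\ref{113}.
\begin{itemize}
\item If $\C_1=\C_2$, read off from the \cite{BH} table that case 3c with $q=2,3$ (and the related small cases) has no pencil containing both a hyperbolic quadric and a cone.
\item If $\C_1\neq\C_2$, they meet in one rational point $X$. Take the lines of $\H_1$ through $X$ and the generator $XV$ of $\Q_2$, and use the tangent-plane argument to produce at least two further affine common points.
\end{itemize}
For part \ref{114d}, when $H\in\D_1\cap\D_2$ choose $P_\infty=H$. By Theorem~\ref{001}(1), $\D_2$ is then a plane $\sigma\neq\pi_\infty$ and $\D_1$ is still $\H_1$, so $\D_1\cap\D_2$ corresponds to $\sigma\cap\H_1$ together with $P_\infty$. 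The section $\sigma\cap\H_1$ is either two lines through a point ($2q+1$ points), or a non-degenerate conic meeting the line $\sigma\cap\pi_\infty$ in $0$, $1$ or $2$ points ($q+1$, $q$ or $q-1$ affine points). The line $\sigma\cap\pi_\infty$ cannot be contained in a tangent plane's pair of lines, because $\H_1\cap\pi_\infty$ is a non-degenerate conic. Adding $P_\infty$ gives sizes $2q+2,q+2,q+1,q$. Here $2q+2$ must be corrected to $2q+1$: if $\sigma$ is a tangent plane, its two lines meet $\pi_\infty$ in two points of the conic, and the count of affine points is $2q-1$, giving $2q$. This yields exactly $\{q,q+1,q+2,2q,2q+1\}$, as claimed.

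The main obstacle I expect is ruling out size $2q+2$ in the case $H\notin\D_1$. This needs the \cite{BH} table to show that case 4c pencils contain no quadratic cone, or a direct argument that a chord of $\N$ lying on the cone must pass through the vertex $V$.
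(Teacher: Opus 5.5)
Your route is the paper's. You use $P_\infty=H$ and plane sections of $\H_1$ when the head is in the intersection. Otherwise you take a non-head point as $P_\infty$ and apply the Bruen--Hirschfeld list to the pencil of $\H_1$ and the cone $\Q_2$. A tangent-plane argument at the point $X$ common to $\C_1$ and $\C_2$ then rules out size $2$.

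The obstacle you flag is not one. Every line on a quadratic cone is a generator through $V$, and every twisted cubic on it contains $V$ (the Hirschfeld lemma you quote). So in case 4 the chord $\ell$ meets $\N$ in the rational point $V$. Hence $\ell$ is a real chord or a tangent, and the count $2q+2$ (case 4c) cannot occur. The paper gets the same result by keeping only those table entries whose pencil contains both a hyperbolic quadric and a cone. Your observation gives more: if $H\notin\D_1$ then $V\notin\H_1$, so the base contains no line or twisted cubic at all. Thus cases 2 and 4 arise only when $H\in\D_1$.

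The genuine hole is size $2$ when $q=2$ and $H\in\D_1\cap\D_2$.
\begin{itemize}
\item With $P_\infty=H$, take a non-tangent plane $\sigma$ whose line at infinity is secant to $\C_1$. It gives $q-1$ affine points, i.e.\ size $q=2$, and your part~4 list keeps this case.
\item Your exclusion step does not reach it. Moving $P_\infty$ to the other common point puts $V$ on $\H_1$. Then the second lines $m_i$ of $\H_1$ in the planes $\langle\ell_i,VX\rangle$ both pass through $V$. So $\langle m_1,m_2\rangle$ contains the vertex and meets $\Q_2$ in generators rather than in the conic the argument needs.
\item The paper disposes of $q=2$ by counting: $|\D_1|+|\D_2|-|\PG(1,8)|=7+5-9=3$. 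Alternatively, a secant line of $\H_1$ in $\PG(3,2)$ lies in only one non-tangent plane, so $\sigma$ and $\pi_\infty$ cannot both be non-tangent through it.
\item Once $q=2$ is handled, part~4 shows the head case never gives $2$. You then need the tangent-plane argument only with $V\notin\H_1$, as in the paper. Your table check for $\C_1=\C_2$ also becomes unnecessary, since only 3c at $q=2$ could give $2$; $q=3$ is irrelevant here.
\end{itemize}

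Smaller slips:
\begin{itemize}
\item In part~4, if $\sigma$ is tangent to $\H_1$ at a point of $\C_1$, both lines meet $\pi_\infty$ at that point. This gives $2q$ affine points and size $2q+1$, which your count misses.
\item Part~3 at $q=5$ also needs $2q+1=11>6+2\sqrt5$.
\item If $V$ lies in the tangent plane $\langle\ell_1,\ell_2\rangle$ of $\H_1$ at $X$, the planes $\langle\ell_i,VX\rangle$ coincide. In that case the plane meets $\Q_2$ in $VX$ and a second generator $VY$, and $VY$ meets $\ell_1$ and $\ell_2$ in two distinct affine points, which is what you need.
\end{itemize}
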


\begin{proof} Suppose $|\D_1\cap\D_2|>0$. We consider two cases, depending on whether the head $H$ of the club $\D_2$ lies in the maximum scattered linear set $\D_2$ or not.

Case A: Suppose $H\in\D_1$, so $H\in\D_1\cap\D_2$. 
Let $P_\infty=H$ and consider the BB-representation of $\PG(1,q^3)$ in $\PG(3,q)$ with the plane at infinity denoted $\pi_\infty$. 
By Theorem~\ref{001}, $\D_1$ is represented by a hyperbolic quadric $\H$ meeting $\pi_\infty$ in a conic $\C$ whose cubic extension contains the three conjugate points defining the plane $\pi_\infty$; and
$\D_2$ is represented by a plane $\pi\neq\pi_\infty$.
We consider two cases, depending on whether $\pi$ is a tangent or secant plane of $\H$. Firstly, 
 suppose $\pi$ is a tangent plane to $\H$. Then the number of affine points in $\pi\cap\H$ is either  $2q-1,2q$ depending on whether   $\pi\cap \pi_\infty$ is either 
 secant or tangent  to $\C$. That is,   $|\D_1\cap \D_2 |$ is $2q,2q+1$  respectively. If $q=2$, then $\D_1\cap\D_2$ is two $\Fq$-lines.  If $q\geq3$, then by Result~\ref{125}, $\D_1\cap\D_2$ is two $\Fq$-lines which by Lemma~\ref{117} belong to different families of $\D_1$. 
 Next, suppose $\pi$ is a secant plane to $\H$. Then the number of affine points in $\pi\cap\H$ is either  $q-1,q,q+1$ depending on whether   $\pi\cap \pi_\infty$ is either secant, tangent or external to $\C$. That is, 
 $|\D_1\cap \D_2 | $ is $q,q+1,q+2$ respectively. Further if $q\geq3$, then by Result~\ref{125}, $\D_1\cap\D_2$ does not contain an   $\Fq$-line. 

Case B: Suppose $H\notin\D_1$, so $H\notin\D_1\cap\D_2$. Without loss of generality, assume $P_\infty\in\D_1\cap\D_2$ and 
consider the BB-representation of $\PG(1,q^3)$ in $\PG(3,q)$, with the plane at infinity  denoted $\pi_\infty$.  
By Theorem~\ref{001},  $\D_1$ corresponds   to a  hyperbolic quadric $\H$ that meets $\pi_\infty$ in a  conic whose cubic extension contains the three conjugate points defining the plane $\pi_\infty$; and $\D_2$ corresponds to a quadratic cone $\Q$ that meets $\pi_\infty$ in a  conic whose cubic extension contains the three conjugate points defining the plane $\pi_\infty$.   
The next part of the proof  is very similar to the proof of Theorem~\ref{113}. The intersection $\S=\H \cap \Q$   is the base curve of a non-singular pencil of quadrics, and the 
possibilities for $\S$ are detailed in \cite{BH}.  We only need consider pencils which contain both a hyperbolic quadric and a quadratic cone which eliminates some of the cases given in \cite{BH}.  Using the same notation for cases used in \cite{BH} (and in the proof of Theorem~\ref{113}),  the possibilities for $\S$ are:
\begin{enumerate}
\item four lines (does not occur)
\item two lines and a non-degenerate conic (case 2c), 
\item two  non-degenerate conics (cases 3a, 3b, 3c, 3d, 3e, 3f, 3g), 
\item  a twisted cubic and one of its chords (cases 4a, 4b), 
\item  an irreducible quartic curve. 
\end{enumerate}

If case 2c occurs, then $\S$ is the set of points lying on two rational lines and a rational non-degenerate conic. The same argument as that given in the proof of Theorem~\ref{113} shows that  $|\D_1\cap\D_2|=2q+1$ and  $\D_1\cap\D_2$ is two $\Fq$-lines (which belong to different families of $\D_1$ if $q\geq3$). 
If case 3 occurs, then  the same argument as that given in the proof of Theorem~\ref{113} shows that $|\D_1 \cap \D_2|\in \{q+2,q+1,q,1\}$ and if $q\geq3$, then $\D_1\cap\D_2$ does not contain an $\Fq$-line. 
If case 4a or 4b occurs, then the same argument as that given in the proof of Theorem~\ref{113} shows that $|\D_1\cap\D_2|\in\{2q,2q+1\}$ and  then $\D_1\cap\D_2$ is two $\Fq$-lines; if $q\geq3$, these belong to different families of $\D_1$ (and belong to different families  $\D_2$). 
If case 5 occurs, then the same argument as that given in the proof of Theorem~\ref{113} shows that if $n=|\D_1\cap\D_2|$, then $q-2\sqrt{q}+1\leq n\leq q+2\sqrt{q}+1$ and if $q\geq3$, then $\D_1\cap\D_2$ does not contain an $\Fq$-line.

In summary, we have shown  the following are the possible intersection sizes when $\D_1\cap\D_2\neq\emptyset$.
\begin{itemize}
\item If $H\in\D_1$, then one of the following occur.
\begin{itemize}
\item[{\scriptsize{$\bullet$}}] $|\D_1\cap\D_2|\in\{q,q+1,q+2\}$ and if $q\geq3$, then $\D_1\cap\D_2$ does not contain an $\Fq$-line.
\item[{\scriptsize{$\bullet$}}]  $|\D_1\cap\D_2|\in\{2q,2q+1\}$, then $\D_1\cap\D_2$ is two $\Fq$-lines.
\end{itemize}
\item If $H\notin\D_1$, then   one of the following occur.
\begin{itemize}
\item[{\scriptsize{$\bullet$}}]  $|\D_1\cap\D_2|\in\{1,q,q+1,q+2\}$ and if $q\geq3$, then $\D_1\cap\D_2$ does not contain an $\Fq$-line.
\item[{\scriptsize{$\bullet$}}]  $|\D_1\cap\D_2|\in\{2q,2q+1\}$ and  $\D_1\cap\D_2$ is two $\Fq$-lines.
\item[{\scriptsize{$\bullet$}}]  $|\D_1\cap\D_2|=n$ where $q-2\sqrt q+1 \leq n \leq q+2\sqrt q+1$, and if $q\geq3$, $\D_1\cap\D_2$ does not contain an $\Fq$-line.
\end{itemize}
\end{itemize}
This completes the proofs of part \ref{114a} (except the statement that $n\neq 2$) and parts \ref{114b}, \ref{114d}. Part \ref{113c} follows by noting that 
  if $q\geq5$ and $|\D_1\cap\D_2|\neq10$, then $q+2\sqrt q+1 < 2q$.

The remaining statement to prove is that a maximum scattered linear set and a club cannot meet in exactly 2 points. The remainder of this proof proves this statement. 
 First note that if $q=2$, then $\PG(1,2^3)$ contains 9 points, a maximum scattered linear set contains 7 points, and a club contains 5 points; so a maximum scattered linear set and a club to meet in at least 3 points. Using the above  summary,  the only way to get intersection size 2 with $q\neq2$ is in the last item.  This occurs when: $H\notin\D_1$; $\D_1$ corresponds   to a  hyperbolic quadric $\H$ that meets $\pi_\infty$ in a  conic $\C_1$ whose cubic extension contains the three conjugate points defining the plane $\pi_\infty$;  $\D_2$ corresponds to a quadratic cone $\Q$ with vertex $V\notin\H$ that meets $\pi_\infty$ in a  conic $\C_2$ whose cubic extension contains the three conjugate points defining the plane $\pi_\infty$; and the 
 curve $\S=\H\cap\Q$ is irreducible. That is,    the conics $\C_1$ and $\C_2$ are not equal. As the cubic extension of $\C_1,\C_2$ contains the three conjugate points $P,P^q,P^{q^2}$, the two conics  meet in exactly one (rational) point, so let $X=\C_1\cap\C_2$. There are two lines of $\H$ through $X$, denoted $\ell_1,\ell_2$, moreover the  plane $\pi_1=\langle \ell_1,\ell_2\rangle$  meets $\pi_\infty$ in a tangent line to $\C_1$. There is a unique plane $\pi_2$ through the line $VX$ that meets $\Q$ in exactly the line $VX$, so $\pi_2\cap\pi_\infty$ is   tangent to $\C_2$. As the 
(distinct) conics $\C_1,\C_2$ share one rational point and three conjugate points in the cubic extension, they do not share a tangent through $X$. Hence $\pi_1\cap\pi_\infty\neq\pi_2\cap\pi_\infty$ and so
  $\pi_1\ne \pi_2$.  Consider the two (distinct) planes $\alpha_i=\langle \ell_i,VX\rangle$, $i=1,2$. For $i=1,2$, as $\alpha_i\cap\H$ contains the line $\ell_i$, it contains a second line $m_i$ (with $X\notin m_i$). So $m_i\cap VX$  is a point $P_i$ which is distinct from $X,V$. As $VX$ is not a line of $\H$, it is a secant of $\H$, and so   $P_1=P_2$.  So the lines $m_1,m_2$ lie on a plane $\beta$ say. Now $\beta$ does not contain the vertex $V$ of the cone,  hence meets $\Q$ in a non-degenerate  conic $\E$.  The lines $m_1,m_2$ are distinct and contain the point  $P_1\in\E$. Hence at least one is a secant line of $\E$, and meets $\E$ in a further point $Q$. Thus $\H\cap\Q$ contains at least two affine points, namely $P_1,Q$, and so  $|\D_1\cap\D_2|\geq3$. 
This completes the proof that a maximum scattered linear set and a club cannot meet in exactly 2  points. 
\end{proof}

Note that Theorem~\ref{114} shows that the maximum intersection size bound of $2q+2$ given in  \cite[Lemma 2.4]{ferr03}  is not tight. The techniques used in the proof of Theorem~\ref{113} also show that the following intersection sizes do occur; in particular, 
the maximum intersection size  bound of $2q+1$ is tight.  
(Note that this is not a complete list of existence - intersections corresponding to an irreducible quartic curve are not included in the list below.)

\begin{corollary} 
\begin{enumerate}
\item In $\PG(1,q^3)$, there exists a maximum scattered linear set   and a club  which meet in 
$x$ points for each $x\in\{q,q+1,q+2,2q,2q+1\}$, with the exception of $x=2$ when $q=2$. 
\item  In $\PG(1,q^3)$, there  exists a maximum scattered linear set   and a club  which meet in exactly $1$ point iff $q$ is odd (in which case the point of intersection is not the head of the club).
 \end{enumerate}
\end{corollary}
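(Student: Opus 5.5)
Part 1 is handled by realizing each configuration of Case A of the proof of Theorem~\ref{114} explicitly in the $\PG(3,q)$ BB-representation with $P_\infty$ the head of the club. By Theorem~\ref{001}, every hyperbolic quadric $\H$ meeting $\pi_\infty$ in a conic $\C$ whose cubic extension contains $P,P^q,P^{q^2}$ represents a maximum scattered linear set $\D_1$ through $P_\infty$. Every plane $\pi\neq\pi_\infty$ represents a club $\D_2$ with head $P_\infty$. Such an $\H$ exists, since the count $\frac12 q^3(q^3-1)$ quoted in the proof of Theorem~\ref{001} is positive; we fix one.

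The Case A computation then gives the five sizes, and it only remains to check that each plane type occurs.
\begin{itemize}
\item A tangent plane to $\H$ at an affine point meets $\pi_\infty$ in a line secant to $\C$; it gives $2q$.
\item A tangent plane to $\H$ at a point of $\C$ meets $\pi_\infty$ in the tangent to $\C$ there; it gives $2q+1$.
\item A secant plane is obtained by taking a line $m$ of $\pi_\infty$ that is external, tangent, or secant to $\C$, and choosing a plane through $m$ other than $\pi_\infty$ and other than the (at most two) tangent planes of $\H$ through $m$. This gives $q+2$, $q+1$, $q$ respectively.
\end{itemize}
We must make sure such a plane exists. Through $m$ there are $q+1$ planes, and we exclude $\pi_\infty$ and at most two tangent planes, so $q+1-3\ge 0$ and the count leaves room at least when $q\ge3$. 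For $q=2$ we check by hand which sizes survive. This should match the stated exception $x=2$: for $q=2$, the value $x=q=2$ is in any case impossible by Theorem~\ref{114}(1). I expect the small-$q$ bookkeeping for the secant cases to be the only delicate point of Part 1.

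For Part 2, the forward direction uses Theorem~\ref{114}(4): if the intersection is a single point, it cannot be $H$, since $1\notin\{q,q+1,q+2,2q,2q+1\}$. So we are in Case B with the single point as $P_\infty$. By the summary, the size $1$ arises only from case 3g, where $\H\cap\Q$ is a single rational conic in $\pi_\infty$, with no affine points. An irreducible quartic cannot have exactly one point here, since the Hasse--Weil range forces $n\ge(\sqrt q-1)^2$, and $n=1$ needs a separate remark. Cleaner is that in case 5 the intersection $\pi_\infty\cap\S$ has one rational point, and so does $\S$ only if no affine points; I would invoke the table of \cite{BH} to exclude this. In case 3g with $q$ even, the cone $\Q$ and $\H$ meet exactly in a conic $\C$. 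Projecting from the vertex $V$ as in Lemma~\ref{118}, the lines of $\H$ must then project to tangents of $\C$. For $q$ even all tangents pass through the nucleus $N$, so every line of $\H$ would meet the line $VN$ (or pass through $N$), which is impossible for the two reguli of a hyperbolic quadric. Alternatively, I would check in \cite{BH} that a pencil of type 3g with $q$ even contains no quadratic cone together with a hyperbolic quadric. This exclusion is the main obstacle, and I would try the \cite{BH} table first.

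For the converse with $q$ odd, we take a pencil of type 3g from \cite{BH} containing both a hyperbolic quadric and a cone. The base is one conic counted twice, i.e.\ the quadrics are tangent along $\C$. We place the plane of $\C$ as $\pi_\infty$ by a collineation sending three conjugate points of $\C$ (obtained by intersecting with a suitable non-rational plane, as in the previous corollary) to $P,P^q,P^{q^2}$. Concretely, for $q$ odd we would take $\Q: x_0x_1-x_2^2=0$ and $\H: x_0x_1-x_2^2=\mu x_3^2$ with $\mu$ chosen so that $\H$ is hyperbolic. Their intersection is exactly the conic in $x_3=0$, so there are no affine points and the sets meet only in $P_\infty$, which is not the head since the head is the vertex, an affine point.
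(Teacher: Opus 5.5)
Part~1 is the paper's argument: you realise all five sizes in Case~A of Theorem~\ref{114}, with the head as $P_\infty$. Your bound of ``at most two'' tangent planes through $m\subset\pi_\infty$ is too crude to settle $q=2$. Use the exact count instead: tangent planes of $\H$ through $m$ correspond to points of $\H$ on the polar line of $m$, so there are $0$, $1$, $2$ of them according as $m$ is external, tangent or secant to $\C$. Hence $q+2$ and $q+1$ occur for every $q$, and only the secant case dies when $q=2$. That is exactly how the paper treats the exception.

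For Part~2 your route differs from the paper's. The paper reads off from the table in \cite{BH} that a type-3g pencil containing a hyperbolic quadric and a cone exists iff $q$ is odd, and then maps the tabulated example into position. You replace the first step with a projection argument that correctly extends Lemma~\ref{118}. If $g$ is a generator of $\H$ and $\langle V,g\rangle\cap\pi_\infty$ met $\C$ in a point $Z\neq g\cap\pi_\infty$, then $VZ$ and $g$ would meet in an affine common point. So for $q$ even every generator of $\H$ meets $VN$, whereas a line not on $\H$ meets at most two lines of a regulus. Your explicit pair also works: take $-\mu$ a nonzero square, e.g.\ $\mu=-1$, and note that the vertex $(0,0,0,1)$ is off $\H$. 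The degeneracy of $x_0x_1+x_2^2+\mu x_3^2$ in characteristic $2$ shows where parity enters. This makes Part~2 independent of the table.

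The gap is the step ``size $1$ arises only from case 3g''. For an irreducible base you only have Hasse--Weil, and for $q\in\{2,4\}$ the bound $(\sqrt q-1)^2\leq 1$ does not exclude $n=1$. So for $q=4$ the ``only if'' direction is not proved. Pointing to \cite{BH} is not an argument here: for an irreducible base the only information available is the Hasse--Weil range, and elliptic curves over $\mathbb{F}_4$ with exactly one rational point do exist. The missing ingredient is the one used at the end of the proof of Theorem~\ref{114}: $\C_1\neq\C_2$ forces affine points in $\H\cap\Q$.

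Your own projection gives this directly, without any case list.
\begin{enumerate}
\item If $|\D_1\cap\D_2|=1$, then $\H\cap\Q\subseteq\pi_\infty$.
\item Suppose $\C_1\neq\C_2$, with common rational point $X$. For each generator $g$ of $\H$ through $X$, the line $\langle V,g\rangle\cap\pi_\infty$ can meet $\C_2$ only in $X$, so it is the tangent $t$ to $\C_2$ at $X$.
\item Hence both generators of $\H$ through $X$ lie in $\langle V,t\rangle$, and $t$ is also the tangent to $\C_1$ at $X$.
\item Both conics pass through $X,P,P^q,P^{q^2}$, no three of which are collinear, and share a tangent at $X$, so they coincide. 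This is a contradiction.
\end{enumerate}
Hence $\C_1=\C_2$ and $\H\cap\Q=\C$, which is exactly the situation your nucleus argument excludes for $q$ even. For $q=2$ you can also just count: $7+5-9=3$.
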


\begin{proof}
Consider the proof of Case A in Theorem~\ref{114}, where the intersection numbers 
  $\{q,q+1,q+2,2q,2q+1\}$ occurs as the intersection of a plane and a quadratic cone. All these cases occur except the case when $q=2$ and the intersection is size 2. This intersection number arises in case A as follows:  when $\pi$ is a secant plane to $\H$, and $\pi\cap\pi_\infty$ is a secant line. That is, there are two secant planes of $\H$ through the secant line $\pi\cap\pi_\infty$, namely $\pi,\pi_\infty$. This is a contradiction when $q=2$ since a secant line of a hyperbolic quadric $\H$ lies in exactly three planes of $\PG(3,2)$, one plane is secant to $\H$ and two planes are tangent to $\H$. We observe that each of these intersection numbers also occurs in case B. In particular this means that for each intersection number, there are examples where the head of the club lies both in, and not in, the maximum scattered linear set.

For part 2: it follows from  the proof of Theorem~\ref{114} that the only case where a maximum scattered linear set and club  meet in exactly one point is in case B, when the hyperbolic quadric $\H$ and the quadratic cone $\Q$ meet in a repeated conic  (which lies in  $\pi_\infty$). 
This is case 3g in \cite{BH}.  The table in \cite{BH} shows that there exists  a pencil  containing one hyperbolic quadric and one quadratic cone whose base is one non-degenerate conic iff $q$ is odd.  We can  map the example given in \cite{BH} to get an example of a hyperbolic quadric and a quadratic cone  that meet in a non-degenerate conic  $\C$, such that the  cubic extension  of $\C$ contains  the three conjugate points  defining $\pi_\infty$. This corresponds to an example of a  maximum scattered linear set and a club meeting in exactly one 1 point existing iff $q$ is odd.  
\end{proof}

\begin{remark} {\textup{ Note that the bound of $q\geq5$, $|\D_1\cap\D_2|\neq10$ given  in part Theorem~\ref{114}(\ref{114c}) is tight --  Magma \cite{magma} has been used to construct examples when $q=5$, of (a) a maximum scattered linear set and a club that meet in 10 points with the intersection forming two $\Fq$-lines, and (b)  a maximum scattered linear set and a club that meet in 10 points with the intersection not containing any $\Fq$-lines.}} \end{remark}

\subsection{The intersection of two clubs}

%
%
%

We prove the following   about the intersection of two clubs.

\begin{theorem}\label{116} 
Let $\D_1,\D_2$ be two distinct clubs in $\PG(1,q^3)$.
Then the following hold.
\begin{enumerate}
\item\label{116a}  $|\D_1\cap\D_2|\in\{0,1,2,2q\} \cup \{n\,|\, q-2\sqrt{q} +1\leq n\leq q+ 2 \sqrt{q}+1 \} $.

\item\label{116b}  If $q\geq3$ and $\D_1\cap\D_2$ contains an $\Fq$-line and a further point, then $\D_1\cap\D_2$ is two $\Fq$-lines.  In this case, $\D_1,\D_2$ have distinct heads. 

\item\label{116c} If  $q>5$ and $|\D_1\cap \D_2|=2q$, then $\D_1\cap\D_2$ is two $\Fq$-lines. 
\item\label{116d}  If $\D_1$ and $\D_2$ share a common head, then $\D_1\cap\D_2$ is either one point or an $\Fq$-line.
\end{enumerate}
\end{theorem}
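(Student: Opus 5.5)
We follow the pattern of the proofs of Theorems~\ref{113} and \ref{114}: assume $|\D_1\cap\D_2|>0$, and split according to whether the heads $H_1,H_2$ of $\D_1,\D_2$ coincide, or one head lies in the intersection, or neither does.

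\emph{Case A: $H_1=H_2=H$.} Take $P_\infty=H$. By Theorem~\ref{001}(1), $\D_1,\D_2$ are represented by distinct planes $\pi_1,\pi_2\neq\pi_\infty$ of $\PG(3,q)$. The line $\pi_1\cap\pi_2$ either lies in $\pi_\infty$ or is an affine line. If it lies in $\pi_\infty$, there are no affine common points, so $\D_1\cap\D_2=\{H\}$. Otherwise, by Result~\ref{125}, the affine line together with $P_\infty$ is an $\Fq$-line of size $q+1$. This proves part~\ref{116d}.

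\emph{Case B: $H_1\neq H_2$ and some head, say $H_1$, lies in $\D_1\cap\D_2$.} Take $P_\infty=H_1$. Then $\D_1$ is a plane $\pi$ and $\D_2$ is a quadratic cone $\Q$ with affine vertex $V$ and base conic $\C\subset\pi_\infty$ whose cubic extension contains $P,P^q,P^{q^2}$.
\begin{itemize}
\item If $V\in\pi$, then $\pi\cap\Q$ is $V$ alone, one generator, or two generators. This gives $|\D_1\cap\D_2|=2,\ q+1,\ 2q$ respectively, according as $\pi\cap\pi_\infty$ is external, tangent or secant to $\C$.
\item If $V\notin\pi$, then $\pi\cap\Q$ is a non-degenerate conic meeting $\pi_\infty$ in $0,1,2$ points (it is not contained in $\pi_\infty$ since $\pi\ne\pi_\infty$). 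This gives sizes $q+2,\ q+1,\ q$.
\end{itemize}
All of these lie in the stated set, since $q\le q+2\le q+2\sqrt q+1$. By Result~\ref{125}, an $\Fq$-line arises only from generators, so for $q\ge 3$ the only case with an $\Fq$-line plus a further point is two generators, giving two $\Fq$-lines.

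\emph{Case C: neither head lies in the intersection.} Take $P_\infty\in\D_1\cap\D_2$. Then both clubs are quadratic cones $\Q_1,\Q_2$ with distinct vertices, and we classify $\S=\Q_1\cap\Q_2$ via \cite{BH}. Since the pencil contains two cones, we first check, using the table in \cite{BH}, which singular-base cases survive. The four-line case is excluded exactly as in Theorem~\ref{113}. For two lines and a conic, or a twisted cubic and a chord, the argument of Theorem~\ref{113} again puts the non-rational part through $P,P^q,P^{q^2}$. Here we must check from the table that a pencil with two cones only allows the subcases giving $2q-1$ affine points, i.e.\ $|\D_1\cap\D_2|=2q$; the rational lines give two $\Fq$-lines. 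For two conics, the argument of Theorem~\ref{113} forces one conic into $\pi_\infty$, giving sizes in $\{1,q,q+1,q+2\}$; by Lemma~\ref{118} the repeated-conic case with intersection size $1$ occurs only for $q$ even. An irreducible quartic gives the Hasse--Weil range via projection, as in Theorem~\ref{113}.

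\emph{Deducing parts~\ref{116a}--\ref{116c}.} Collecting the cases gives part~\ref{116a}. For part~\ref{116b}, an $\Fq$-line plus a further point only arises from two lines, i.e.\ two $\Fq$-lines, and Case A shows this forces distinct heads. For part~\ref{116c}, when $q>5$ we have $q+2\sqrt q+1<2q$, so size $2q$ can only come from the two-line configurations.

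The main obstacle is the careful use of the \cite{BH} table in Case C. One must verify that no pencil containing two quadratic cones with distinct vertices has a base yielding $2q+1$ or $2q+2$ points. For the twisted cubic with a chord, both cone vertices lie on the cubic; we would check this against subcases 4b and 4c, in the spirit of Lemma~\ref{118}. If the table is ambiguous, we would instead argue directly that the common chord through two vertices of cones containing a twisted cubic is tangent-type, reducing to case 4a.
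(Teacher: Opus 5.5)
\textbf{Gap in Case C.} Your Cases A and B are fine. Your Case B just merges the paper's cases B and C by putting $P_\infty$ at a head that lies in the intersection.

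The problem is your Case C, where neither head lies in $\D_1\cap\D_2$. You propose to read off the possibilities for $\S=\Q_1\cap\Q_2$ from the table in \cite{BH}, but:
\begin{itemize}
\item you never check that a pencil spanned by two quadratic cones satisfies the hypotheses of that classification, and the paper deliberately avoids \cite{BH} here for this reason;
\item you leave the decisive step, excluding $2q+1$ and $2q+2$, as an unverified ``main obstacle'';
\item your expectation that the two-lines-and-a-conic and twisted-cubic-and-chord configurations survive in Case C, giving $2q$ points and two $\Fq$-lines, is wrong, because those configurations cannot occur there at all.
\end{itemize}
As written, Case C and hence parts~\ref{116a}--\ref{116c} are not established.

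\textbf{The missing idea.} You half-state it yourself when you write that both cone vertices lie on the cubic; that already contradicts the case hypothesis. By Theorem~\ref{001}(2), the vertices $V_1,V_2$ of $\Q_1,\Q_2$ represent the heads $H_1,H_2$. Also $V_1\in\S$ iff $H_1\in\D_2$, that is, iff $H_1\in\D_1\cap\D_2$. So in Case C neither vertex lies on $\S$.

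Every line contained in a quadratic cone passes through its vertex, and this holds over any extension field too. So a line of $\S$, rational or not, would have to be the line $V_1V_2$ and would force $V_1\in\S$, a contradiction. Hence $\S$ contains no line. If $\S$ is reducible, its degree can therefore only split as $4=2+2$. The conic argument from Theorem~\ref{113} then gives $|\D_1\cap\D_2|\in\{1,q,q+1,q+2\}$, and the irreducible case gives the Hasse--Weil range.

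\textbf{What this gives.} This settles Case C without any appeal to \cite{BH}. It also shows that an $\Fq$-line plus a further point, and intersection size $2q$ (when $q>5$), arise only in your Case B with $V\in\pi$, via two generators. That yields parts~\ref{116b} and~\ref{116c} cleanly.
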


\begin{proof}
Suppose $|\D_1\cap\D_2|>0$.
Denote the heads of $\D_1,\D_2$ by $H_1,H_2$ respectively. We consider
four cases: (A) when $H_1=H_2$; (B) when $H_1\neq H_2$ and  
$H_1,H_2\in\D_1\cap\D_2$; (C) when $H_1\neq H_2$ and  exactly one of
$H_1,H_2$ lies in $\D_1\cap\D_2$; and (D) when $H_1\neq H_2$ and
$H_1,H_2\notin\D_1\cap\D_2$.

Case A: Suppose $H_1=H_2$. We work in the BB representation of
$\PG(1,q^3)$ in $\PG(3,q)$  using $P_\infty=H_1$ as the point at infinity.
By Theorem~\ref{001}, $\D_1,\D_2$ are represented by distinct
affine planes $\pi_1,\pi_2$ respectively.
If $\pi_1\cap\pi_2$ is a line contained in $\pi_\infty$, then $|\D_1\cap
\D_2 | =1$ and $\D_1\cap\D_2=\{H_1\}=\{H_2\}$. If  $\pi_1\cap\pi_2$ is a  line
not contained in $\pi_\infty$, then
  $|\D_1\cap \D_2 | =q+1$. Further, if $q\geq3$, then by Result~\ref{125}, $\D_1\cap\D_2$
is an $\Fq$-line; moreover if $q=2$, then $|\D_1\cap \D_2 | =3$ and so $\D_1\cap\D_2$
is an $\Fq$-line.
Note that  both cases always occur for any $q$.

Case B: Suppose $H_1\neq H_2$ and  
$H_1,H_2\in\D_1\cap\D_2$.
We work
in the BB representation of $\PG(1,q^3)$ in $\PG(3,q)$  using $P_\infty=H_2$ as
the point at infinity.
By Theorem~\ref{001}, $\D_1$ is represented by a quadratic cone
$\Q$ that meets $\pi_\infty$ in a  conic whose cubic extension contains the three conjugate points defining the plane $\pi_\infty$;
and $\D_2$ is represented by an affine plane $\pi$.
As $H_1\in\D_2$, the  vertex $V$ of $\Q$ lies in $\pi$. 
Hence $\pi\cap\Q$ is either $V$, one line through $V$, or two lines through $V$ 
(depending on whether the line $\pi\cap\pi_\infty$ is  external, tangent or secant to the conic $\Q\cap\pi_\infty$). Hence $|\D_1\cap\D_2|$ is $2,q+1,2q$ respectively. Further, if $q\geq3$, then by 
Result~\ref{125},   then  $\D_1\cap \D_2 $ is respectively two points, one $\Fq$-line or two $\Fq$-lines. We also note this also holds for $q=2$.
Moreover, each case exists for all $q$.

Case C: Suppose $H_1\neq H_2$ and  exactly one of
$H_1,H_2$ lies in $\D_1\cap\D_2$.
Without loss of generality suppose $H_2\in\D_1\cap\D_2$. 
We work
in the BB representation of $\PG(1,q^3)$ in $\PG(3,q)$  using $P_\infty=H_2$ as
the point at infinity.
By Theorem~\ref{001}, $\D_1$ is represented by a quadratic cone
$\Q$ that meets $\pi_\infty$ in a  conic whose cubic extension contains the three conjugate points defining the plane $\pi_\infty$;
and $\D_2$ is represented by an affine plane $\pi$.
As $H_1\notin\D_2$,  $\pi$ does not contain the vertex of $\Q$. Hence $\pi\cap \Q$ is a conic containing $q+1$, $q$ or $q-1$ affine points
(depending on whether the line $\pi\cap\pi_\infty$ is  external, tangent or secant to the conic $\Q\cap\pi_\infty$). Hence  $\D_1\cap \D_2$ contains $q+2,q+1,q$ points respectively. Further, if $q\geq3$, then by Result~\ref{125}, this intersection does not contain an $\Fq$-line. Moreover, each case exists for all $q$.

Case D: Suppose $H_1\neq H_2$ and $H_1,H_2\notin\D_1\cap\D_2$.  Let
$P_\infty\in\D_1\cap\D_2$ and  work in the BB representation of $\PG(1,q^3)$ in
$\PG(3,q)$  using $P_\infty$ as the point at infinity.
By Theorem~\ref{001}, $\D_1$, $\D_2$ are represented by
quadratic cones $\Q_1,\Q_2$ respectively.
Let $\S=\Q_1\cap \Q_2$. In this case, we cannot use the characterisation of
the base of a non-singular pencil of quadrics given in \cite{BH} -- the quadrics
$\Q_1,\Q_2$ are both singular, so we may be working with a pencil which is
singular. In particular, the breakdown of cases given in \cite{BH} is not complete for singular pencils.

As $\S$ is the intersection of two quadrics, it is a curve of degree $4$. If the curve $\S$ is irreducible, then  the same
explanation as given in the proof of Theorem~\ref{113} shows that the
number $n=|\S|=|\D_1\cap\D_2|$ satisfies $(\sqrt q-1)^2 \leq n \leq
(\sqrt q+1)^2$.

Suppose $\S$ is 
reducible, then its degree can be partitioned in four ways:
$4=1+1+1+1=1+1+2=2+2=1+3$.
Now the vertex of the cones $\Q_1,\Q_2$ correspond to the heads $H_1,H_2$ respectively.
As $H_1,H_2\notin\D_1\cap\D_2$, the set $\S$ does not contain the vertex
of either cone. In particular this means that $\S$ does not contain a
generator line from either cone. That is, $\S$ does not contain a line,
and so if the curve $\S$ is reducible, then the degree can be partitioned
in exactly one way, namely $4=2+2$.  
That is, the points of $\S$ lie on two non-degenerate conics
$\C_1,\C_2$. Following the same argument as in  case 3 in the proof of
Theorem~\ref{113}, we deduce that
one of the conics, say $\C_1$, is contained in  $\pi_\infty$ and  both $\C_1$ and $ \C_2$ are rational conics. There are now two cases to  consider.
If $\C_2$ also lies in $\pi_\infty$, then $\S\subset\pi_\infty$ and $\S=\Q_1\cap \Q_2$, so $\S$ is a repeated non-degenerate conic, that is, $\C_1=\C_2$.  In this case $\D_1\cap\D_2$ is a point which is distinct from $H_1,H_2$.
Note that by Lemma \ref{118}  this subcase occurs iff $q$ is even. 
If $\C_2$ lies a plane $\pi\neq\pi_\infty$, then $|\C_1\cap
\C_2|$ is either $0,1,2$ and so  $\S$ contains $q+1, q, q-1$ affine points
respectively. That is, $|\D_1\cap\D_2|\in \{q+2,q+1,q,1\}$. Further if $q\geq3$, then as
$\S$ does not contain a line or a twisted cubic,  $\D_1\cap\D_2$  does
not contain an $\Fq$-line by Result \ref{125}.

In summary, we have shown the following are the possible intersection sizes  when $\D_1\cap\D_2\neq\emptyset$.
\begin{itemize}
\item If $H_1=H_2$, then $|\D_1\cap\D_2|\in \{1,q+1\}$ and $\D_1\cap\D_2$  is either a point or an $\Fq$-line. Moreover all cases exist.
\item If $H_1\neq H_2$, and $\D_1\cap\D_2$ contains both heads, then $|\D_1\cap\D_2|\in\{2,q+1,2q\}$ and $\D_1\cap\D_2$ is either two points, an $\Fq$-line or two $\Fq$-lines.  Moreover all cases exist.

\item If  $H_1\neq H_2$, and $\D_1\cap\D_2$ contains exactly one head, then  $|\D_1\cap\D_2|\in\{q,q+1,q+2\}$ and if $q\geq3$, then $\D_1\cap\D_2$ does not contain an $\Fq$-line.  Moreover all cases exist.

\item If  $H_1\neq H_2$, and $\D_1\cap\D_2$ contains neither head, then one of the following occur.
\begin{itemize}
\item[{\scriptsize{$\bullet$}}]  $|\D_1\cap\D_2|\in\{1,q,q+1,q+2\}$ and if $q\geq3$, then $\D_1\cap\D_2$ does not contain an $\Fq$-line. Moreover, the intersection size 1 exists in this case iff $q$ is even.
\item[{\scriptsize{$\bullet$}}]  $|\D_1\cap\D_2|=n$ where $q-2\sqrt q+1 \leq n \leq q+2\sqrt q+1$, and if $q\geq3$, then  $\D_1\cap\D_2$ does not contain an $\Fq$-line.
\end{itemize}
\end{itemize}
This completes the proof of parts \ref{116a}, \ref{116b}, \ref{116d}. Noting that  if $q>5$, then $q+2\sqrt q+1 < 2q$ completes the proof of part \ref{116c}.
\end{proof}

The proof of Theorem~\ref{116} proves the existence of  the following examples. In particular, we note that the maximum size given in Theorem~\ref{116} is attained. 

\begin{corollary} In $\PG(1,q^3)$, the following examples exist for all $q\geq2$.
\begin{enumerate}

\item\label{116attained} There exists two clubs that meet in $x$ points for $x\in\{1,2,q,q+1,q+2,2q\}$. 
\item \label{116attained2} There exists two clubs that meet in one $\Fq$-line (this occurs with two clubs with a common head, and with two clubs with distinct heads).
\end{enumerate}
\end{corollary}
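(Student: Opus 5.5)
The corollary is an existence statement, so for each required configuration I will build it explicitly in the $\PG(3,q)$ BB-representation and translate back using Theorem~\ref{001}. Throughout, fix a point $P_\infty$, the plane at infinity $\pi_\infty$, and the three conjugate points $P,P^q,P^{q^2}$ defining $\pi_\infty$.

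The common ingredient is a non-degenerate conic $\C$ in $\pi_\infty$ whose cubic extension contains $P,P^q,P^{q^2}$; the proof of Theorem~\ref{001} notes that there are $q^2+q+1$ such conics. Given such a $\C$ and an affine point $V$, the cone with vertex $V$ and base $\C$ represents a club whose head is the point corresponding to $V$ (Theorem~\ref{001}(2)). Any affine plane represents a club with head $P_\infty$ (Theorem~\ref{001}(1)).

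\textbf{Part 1.}
\begin{itemize}
\item \emph{$x=1$.} Take two affine planes meeting in a line of $\pi_\infty$. This is Case A of Theorem~\ref{116}, and the two clubs meet only in $P_\infty$.
\item \emph{$x\in\{2,q+1,2q\}$.} Take $\D_1$ to be the cone $\Q$ with vertex $V$ and base $\C$. Take $\D_2$ to be an affine plane $\pi$ through $V$, chosen so that $\pi\cap\pi_\infty$ is external, tangent or secant to $\C$ respectively. This is Case B, and the three choices give intersection sizes $2$, $q+1$, $2q$.
\item \emph{$x\in\{q,q+1,q+2\}$.} Take the same cone and an affine plane $\pi$ with $V\notin\pi$, chosen so that $\pi\cap\pi_\infty$ is secant, tangent or external to $\C$ respectively. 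This is Case C.
\end{itemize}
The only thing to check is that each required line of $\pi_\infty$ exists for every $q\ge 2$. A non-degenerate conic in $\PG(2,q)$ has tangent, secant and external lines for all $q$, and any line of $\pi_\infty$ extends to an affine plane through a chosen affine point $V$, or to one avoiding $V$. So every configuration exists for all $q\geq2$.

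\textbf{Part 2.} The two constructions are:
\begin{itemize}
\item \emph{Common head.} Take two affine planes meeting in an affine line. By Result~\ref{125} the intersection is an $\Fq$-line, of size $q+1$. For $q=2$, any three points already form an $\Fq$-line.
\item \emph{Distinct heads.} Take the cone $\Q$ and an affine plane through $V$ meeting $\pi_\infty$ in a tangent line to $\C$. Then $\pi\cap\Q$ is a single generator line, and $\D_1\cap\D_2$ is that $\Fq$-line. The heads are $H_1=V$ and $H_2=P_\infty$, which are distinct.
\end{itemize}

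\textbf{Main obstacle.} The only real step is existence of the base conic $\C$, i.e.\ a non-degenerate conic of $\pi_\infty$ through $P,P^q,P^{q^2}$. I would justify it directly: the conics through these three points and two further rational points of $\pi_\infty$ form a Frobenius-invariant family, so they are defined over $\Fq$. Such a conic is non-degenerate because $P,P^q,P^{q^2}$ are not on any rational line. Alternatively, one can cite the count in the proof of Theorem~\ref{001}.
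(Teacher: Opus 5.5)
Your proposal is correct and is essentially the paper's argument. The paper derives this corollary directly from Cases A, B and C in the proof of Theorem~\ref{116}, which are exactly your plane--plane and plane--cone configurations: the affine plane either passes through or avoids the vertex, and its line at infinity is external, tangent or secant to the base conic.

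Your separate justification of the base conic is not needed, since such a conic already exists by Theorem~\ref{001}(2) applied to any club through $P_\infty$ with a different head. If you keep the five-point argument, you should state that the conic is unique because no three of the five points are collinear. Without uniqueness, Frobenius-invariance of the family does not show that the conic is defined over $\mathbb{F}_q$.
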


Further,  the proof of Theorem~\ref{116}  shows the following.

\begin{corollary} In $\PG(1,q^3)$:
\begin{enumerate}
\item If  $\D_1,\D_2$ are clubs with $|\D_1\cap\D_2|=2$, then $\D_1,\D_2$ have distinct heads and $\D_1\cap\D_2$ consists of the two  heads.  
\item If $q$ odd and $\D_1,\D_2$ are clubs with $|\D_1\cap\D_2|=1$, then $\D_1,\D_2$ have a common head. 
\end{enumerate}
\end{corollary}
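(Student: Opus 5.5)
The plan is to read both parts off the case summary at the end of the proof of Theorem~\ref{116}. That summary lists, for two clubs $\D_1,\D_2$ with heads $H_1,H_2$ and nonempty intersection, every possible intersection size in each of the four cases A--D, together with the structure of the intersection.

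For part 1, suppose $|\D_1\cap\D_2|=2$. I would go through the cases one at a time.
\begin{itemize}
\item Case A ($H_1=H_2$) gives sizes $1$ and $q+1$ only, and $q+1\geq3$.
\item Case C (exactly one head in the intersection) gives sizes $q,q+1,q+2$. The only one that can equal $2$ is $q$, when $q=2$.
\item Case D (neither head in the intersection) gives sizes $1,q,q+1,q+2$, or an irreducible-quartic size $n$ with $(\sqrt q-1)^2\le n\le(\sqrt q+1)^2$. Here $n=2$ is not excluded a priori.
\item Case B (both heads in the intersection, $H_1\neq H_2$) gives size $2$ exactly when $\pi\cap\Q=\{V\}$. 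Then $\D_1\cap\D_2=\{H_1,H_2\}$, which is the desired conclusion.
\end{itemize}

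Two subcases therefore remain: $q=2$ in Cases C and D, and the irreducible quartic in Case D.

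For $q=2$, I would argue by counting. $\PG(1,8)$ has $9$ points and each club has $5$ points, so two clubs meet in at least $1$ point. To rule out size $2$ in Cases C and D, I would look at the plane section in Case C. The affine part of $\pi\cap\Q$ is a conic minus the two points on a secant line, which leaves $q-1=1$ affine point, plus $P_\infty=H_2$. That is a genuine size-$2$ intersection that does not consist of both heads. So I would double-check the $q=2$ situation directly, and be prepared for the statement to need $q\ge3$ there, or else find a reason Case C with a secant line is impossible when $q=2$.

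The irreducible quartic with exactly $2$ rational points is the main obstacle. Here I would mimic the final argument in the proof of Theorem~\ref{114}. The two cones $\Q_1,\Q_2$ meet $\pi_\infty$ in distinct conics $\C_1,\C_2$ sharing exactly one rational point $X$ (the three conjugate points defining $\pi_\infty$ lie on both). Consider the planes through the generators $V_1X$ and $V_2X$: a plane through $V_2X$ not containing $V_1$ meets $\Q_1$ in a conic, and meets $\Q_2$ in $V_2X$ together with a second generator. I would use the plane $\langle V_1X,V_2X\rangle$ and the tangent-plane structure at $X$ to produce a further affine point of $\Q_1\cap\Q_2$, and then a second one, giving $|\D_1\cap\D_2|\ge3$.

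For part 2, let $q$ be odd and $|\D_1\cap\D_2|=1$. Size $1$ arises only in Case A, which has a common head, or in Case D with a repeated conic. By Lemma~\ref{118}, that Case D configuration exists only when $q$ is even. Size $1$ does not arise from the quartic bound in a problematic way, since for the irreducible case the base curve contains the rational point on $\pi_\infty$ plus affine points. If needed, the same tangent-plane argument as above shows at least one affine point. Hence for $q$ odd the heads coincide.
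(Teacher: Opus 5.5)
Your route is the paper's own. The corollary is read straight off the case summary at the end of the proof of Theorem~\ref{116}. You are right that this summary does not support part~1 for small $q$. In fact part~1 is false there, so the plan cannot be completed without restricting $q$.

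\textbf{The $q=2$ case.} Your Case~C configuration really exists. The three planes of $\PG(3,2)$ through a secant line $s$ of $\Q\cap\pi_\infty$ are $\pi_\infty$, $\langle s,V\rangle$, and a third plane $\pi$ that misses $V$. Then $\pi\cap\Q$ is a conic with exactly one affine point, so $\D_1\cap\D_2=\{H_2,Y\}$ with $Y\neq H_1$. More bluntly, $\PG(1,8)$ contains $(q^3+1)q(q^2+q+1)=126=\binom{9}{5}$ clubs, so every $5$-subset is a club.

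\textbf{Where the plan fails.} The step that genuinely fails is your Case~D patch ``produce a further affine point, and then a second one''. The Theorem~\ref{114} trick used the two lines of $\H$ through $X$, whereas a cone has only one line through $X$.

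The first affine point does exist, and this is exactly what part~2 needs at $q=3$, where Hasse--Weil still allows $n=1$. The argument is as follows.
\begin{itemize}
\item The distinct conics $\C_1,\C_2$ meet in $P,P^q,P^{q^2}$ and one rational point $X$, each with multiplicity one.
\item Hence their tangents $t_1,t_2$ at $X$ differ, and $T_X\Q_1=\langle V_1,t_1\rangle\neq\langle V_2,t_2\rangle=T_X\Q_2$.
\item The points $V_1,V_2,X$ are not collinear, since $V_2\notin\Q_1$. So at most one of these tangent planes equals $\langle V_1,V_2,X\rangle$.
\item If, say, $V_2X\not\subset T_X\Q_1$, then the generator $V_2X$ meets $\Q_1$ in a rational point other than $X$ and $V_2$. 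That point is therefore affine.
\end{itemize}

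A second affine point need not exist. Work in $\PG(3,3)$ with $\pi_\infty\colon X_3=0$, and take
\begin{itemize}
\item $\Q_1\colon X_1^2-X_0X_2+X_1X_2+2X_0X_1+2X_0^2=0$, with vertex $(0,0,0,1)$;
\item $\Q_2\colon (X_1-X_3)^2=X_0X_2$, with vertex $(0,1,0,1)$.
\end{itemize}
On $\C_2=\{(1,t,t^2,0)\}\cup\{(0,0,1,0)\}$ the form of $\Q_1$ restricts to $t^3+2t+2$, which is irreducible over $\mathbb{F}_3$. So $\C_1\cap\C_2$ consists of $(0,0,1,0)$ together with three conjugate points over $\mathbb{F}_{27}$ lying on no rational line. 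After a collineation fixing $\pi_\infty$, these three points are the points defining $\pi_\infty$. By Theorem~\ref{001}(2), the two cones then represent two clubs whose heads correspond to the vertices. Neither vertex lies on the other cone. A direct check shows that the only rational points of $\Q_1\cap\Q_2$ are $(0,0,1,0)$ and $(0,1,2,1)$. So these two clubs meet in exactly two points, neither of which is a head, and part~1 fails for $q=3$ as well.

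\textbf{What does hold.} Part~1 follows from the summary once $q\geq 7$, since then $(\sqrt q-1)^2>2$. Part~2 holds for every odd $q$ once the one-point argument above is supplied; your sketch asserts this affine point but does not give the transversality reason. The paper's one-line derivation from the summary has these same holes.
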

We note that, if $q$ even, then   intersection size 1 occurs  with two clubs with a common head, and with two clubs with distinct heads.

\begin{remark} {\textup{Note that the bound  of $q\geq5$, $|\D_1\cap\D_2|\neq10$ given in part Theorem~\ref{116}(\ref{116c}) is tight -- Magma \cite{magma} has been used to construct examples when $q=5$, of (a) two clubs that meet in 10 points with the intersection forming two $\Fq$-lines, and (b)  two clubs that meet in 10 points with the intersection not containing any $\Fq$-lines. }}  
\end{remark}

\section{Conclusion}

We conclude with a discussion on the existence of disjoint linear sets of rank 3 in $\PG(1,q^3)$. There exists disjoint maximum scattered linear sets  in $\PG(1,q^3)$, $q\geq3$, namely   two maximum scattered linear sets with the same carriers are disjoint. 
Further, as noted in the introduction, 
\cite[Cor 6.6]{SVDVV} shows that   there exists disjoint clubs  in $\PG(1,q^3)$, $q$ odd; and 
 \cite[Remark 3.13]{SVDVV} conjectures that there do not exist disjoint clubs  in $\PG(1,q^3)$,   $q$ even. 

The remaining case to consider is whether there exists a maximum scattered linear set $\D_1$ and a club $\D_2$   in $\PG(1,q^3)$ with $|\D_1\cap\D_2|=0$. Checking with Magma \cite{magma} shows it is not possible to find a disjoint maximum scattered linear set   and   club   when $q=2,3$, however, when $q\geq4$, we think there will always exist an example of a maximum scattered linear set $\D_1$ and a club $\D_2$ with $|\D_1\cap\D_2|=0$. One approach to look at this in the $\PG(3,q)$ BB-representation  using \cite[Thm 7.2]{sher86} which shows that there is always a club with head $P_\infty=(1,0)$ that is disjoint from the  cubic surface $\S(1,0,\rho,\sigma)$ where $\rho\sigma\neq0$. The set $\S(1,0,\rho,\sigma)$ is either a point, a club, a maximum scattered linear set, or a Sherk surface of size $q^2+1$ depending on the choice of $\rho,\sigma$. 
A choice of $\rho,\sigma$ for which $\S(1,0,\rho,\sigma)$ is a maximum scattered linear set will give an example of a club and a maximum scattered linear set which are disjoint. We have constructed many values which work for various values of $q$, but have not found a universal choice for $\rho,\sigma$ which works for all $q$.

%
%
%
%
%
%
%

\end{document}